\newtheorem{theorem}{Theorem}
\newtheorem{lemma}[theorem]{Lemma}
\newtheorem{prop}[theorem]{Proposition}
\newtheorem{example}[theorem]{Example}
\newtheorem{Ex}[theorem]{Example}
\theoremstyle{definition}
\newtheorem{definition}[theorem]{Definition}
\theoremstyle{remark}
\newtheorem{remark}[theorem]{Remark}
\newcommand{\SSYT}{\mathcal{SSYT}}
\newcommand{\SVT}{{\mathcal{SVT}\!\!}}
\newcommand{\RPP}{{\rm RPP}}
\newcommand{\EF}{{\rm EF}}
\newcommand{\RSK}{{\rm RSK}}
\renewcommand{\th}{^{\text{th}}}
\newcommand{\bx} {\mathbf{x}}
\def\shape{ {\rm {shape}}}
\def\fw { {\rm {sw}}}
\def\T { {\mathcal T}}
\def \wt {{\rm wt}}
\def \wor {{\rm w}}
\def \sh {{\rm shape}}
\def \fwt {\iota{\rm wt}}
\begin{document}

\begin{center}
{\Large Structure constants for $K$-theory of Grassmannians, revisited}

\bigskip
\bigskip

{\large $\text{Huilan Li}^\dagger$, $\text{Jennifer Morse}^\ddagger$\footnote{This author is supported
			by NSF:1301695}, and $\text{Patrick Shields}^\ddagger$\footnote{
			This author is supported by NSF:1301695 }
			}

\bigskip

\begin{multicols}{2}

{\it $^\dagger$Shandong Normal University

School of Mathematical Sciences

Jinan, Shandong 250014, China}

{\it $^\ddagger$Drexel University

Department of Mathematics

Philadelphia, PA 19104 }

\end{multicols}

\end{center}
\begin{abstract}
The problem of computing products of Schubert classes in the cohomology 
ring can be formulated as the problem of expanding skew Schur polynomials
into the basis of ordinary Schur polynomials.   
In contrast, the problem 
of computing the structure constants of the Grothendieck ring of a 
Grassmannian variety with respect to its basis of Schubert structure sheaves
is not equivalent to expanding skew stable Grothendieck polynomials
into the basis of ordinary stable Grothendiecks.
Instead, we show that the appropriate $K$-theoretic analogy 
is through the expansion of skew reverse plane partitions into the basis 
of polynomials which are Hopf-dual to stable Grothendieck polynomials.
We combinatorially prove this expansion is determined by Yamanouchi 
set-valued tableaux.  A by-product of our results is a dual approach
proof for Buch's $K$-theoretic Littlewood-Richardson rule for the product of 
stable Grothendieck polynomials.
\end{abstract}


\section{Introduction}
The theory of symmetric functions supports Schubert calculus
of the Grassmannian $X={\rm Gr}(k,n)$ by way of the Schur function basis.
In particular, the Schubert decomposition of $X$ is determined by
varieties $X_\lambda$, one for each partition $\lambda$ in a 
$k\times (n-k)$ rectangle.  The Schubert varieties induce 
canonical basis elements for the cohomology ring
of $X$.  In turn, $H^*(X)$ is isomorphic to a certain quotient of the ring 
of symmetric functions $\Lambda$ in $k$ variables.  The Schur functions
$s_\lambda(x_1,\ldots,x_k)$ are representatives for the Schubert 
classes with a critical feature that structure constants in
$$
[X_\lambda]\cdot[X_\mu] = \sum_{\nu} 
C_{\lambda\mu}^\nu \,[X_\nu]
$$
appear as Schur coefficients in a product of Schur functions:
$$
s_\lambda\,s_\mu = \sum_\nu C_{\lambda\mu}^\nu\,s_{\nu}\,.
$$

The realization of Schur functions as weight generating functions of 
semi-standard Young tableaux then offers combinatorial tools for the study.
The development of a rich theory of tableaux ultimately enabled 
the computation of the structure constants $C_{\lambda\mu}^\nu$.
The Littlewood-Richardson rule for $C_{\lambda\mu}^\nu$ dictates
a count of tableaux with certain restrictions~\cite{LR}, and its proof
was settled in~\cite{S77} using the RSK insertion algorithm
on tableaux~\cite{R38,S61,K70}.

As a basis of symmetric functions, the Schur functions are self-dual
with respect to the Hall inner product on $\Lambda$.   This duality 
is central in symmetric function theory and in particular, gives
an alternative way to access the Schubert structure constants.  Namely, 
Littlewood-Richardson numbers arise in the
Schur expansion of skew Schur functions:
\begin{equation}
\label{skewschur}
s_{\nu/\lambda} = \sum_\mu C_{\lambda\mu}^\nu \, s_\mu\,.
\end{equation}
This viewpoint gives rise to simpler proofs of the 
Littlewood-Richardson rule, e.g. \cite{RS98,Ste02}.

Developments in Schubert calculus have established the
importance of combinatorics and symmetric function theory to the
more intricate setting of the Grothendieck ring $K^\circ X$ of 
algebraic vector bundles on $X$.
Lascoux and Sch\"utzenberger~\cite{LS} introduced Grothendieck polynomials 
as representatives for the structure sheaves of the Schubert varieties
in a flag variety.
Fomin and Kirillov~\cite{FK94} studied the symmetric power series resulting from a 
limit of Grothendieck polynomials.   

For the Grassmannian variety $X$,
it was shown in~\cite{Buch} that the stable limits $G_\lambda$ 
are generating series of set-valued tableaux and can be applied to
$K^\circ X$ in a way that mirrors the Schur role in cohomology.  
The classes of the Schubert structure sheaves $\mathcal O_{X_\lambda}$ 
form a basis for the Grothendieck ring of $X$, and the
structure constants in 
$$
[\mathcal O_{X_\lambda}]\cdot [\mathcal O_{X_\mu}]
=\sum_\nu c_{\lambda\mu}^\nu\, [\mathcal O_{X_\nu}]
$$
appear in the product
\begin{equation}\label{Klr}
G_\lambda \, G_\mu = \sum_\nu 
c_{\lambda\mu}^\nu \,G_\nu\,.
\end{equation}
By generalizing RSK insertion, Buch proved that the expansion
coefficients are determined by the number of 
set-valued tableaux with a Yamanouchi column word.

In contrast to~\eqref{skewschur}, the Schubert structure constant
$c_{\lambda\mu}^\nu$ for $K^\circ X$ is not the coefficient of $G_\mu$ 
in $G_{\nu/\lambda}$.  Instead, the bialgebra $\Gamma={\rm span}\{G_\lambda\}$ 
gives rise to a distinct family of symmetric functions $\{g_\lambda\}$ 
defined by taking the Hopf-dual basis to $\{G_\lambda\}$.  Nevertheless, 
inspired by the far-reaching applications of Schur function duality, 
we have discovered that duality can be used to access the $K$-theoretic constants.

A combinatorial investigation of Lam and Pylyavskyy~\cite{LP07} revealed that 
the basis $\{g_\lambda\}$ can be realized as the weight generating functions
of reverse plane partitions.   We study the skew generating functions,
$$
g_{\nu/\lambda} = \sum_{R\in \RPP(\nu/\lambda)} {\bf x}^{\wt(R)}\,,
$$
proving that their $g$-expansion coefficients,
\begin{equation}\label{skewg}
g_{\nu/\lambda} = \sum_\mu a_{\lambda\mu}^\nu \,g_\mu\,,
\end{equation}
match the $K$-theoretic structure constants $c_{\lambda\mu}^\nu$.

We bijectively convert the reverse plane partitions into
tabloids, classical combinatorial objects dating back to Young's study 
of irreducible $S_n$-representations.  
This identification requires an unconventional notion of tabloid weight 
called {\it inflated weight} which we relate to the Yamanouchi property on 
set-valued tableaux.  From the tabloid characterization for $g_{\nu/\lambda}$,
a crystal structure on tabloids~\cite{genomic} leads us to its Schur expansion 
in terms of a distinguished subclass of semi-standard tableaux.  
In turn, the Schur expansion can be converted into a $g$-expansion using elegant 
fillings with the introduction of a sign ~\cite{Lenart}.  
A bijection $\phi_T$ on pairs of tableaux and elegant fillings
leads us to an expression over certain set-valued tableaux which we
then reduce by way of a sign-reversing involution $\tau$.
This establishes that the $g$-expansion coefficients for $g_{\nu/\lambda}$ count
the same Yamanouchi set-valued tableaux prescribed by Buch in 
his $K$-theoretic Littlewood-Richardson rule for $c_{\lambda\mu}^\nu$.
As a by-product, \eqref{skewg} is equivalent to \eqref{Klr}.

\section{Preliminaries}
\label{sec:definitions}

Let $X={\rm Gr}(k,n)$ denote the Grassmannian of $k$-dimensional
subspaces of $\mathbb C^n$ and consider the subspaces $\mathbb C^d$
of vectors which can have non-zero entries only in the first $d$ components.
$X$ can be decomposed into a family of distinguished ``Schubert cells",
indexed by non-decreasing integer partitions $\lambda=(\lambda_1,\ldots,\lambda_k)$ 
with $k$ parts, none of which are larger than $n-k$.
Their closures are the Schubert varieties
$$
X_\lambda = \{V\in {\rm Gr}(k,n) : {\rm dim}(V\cap \mathbb C^{n-k+i-\lambda_i})\geq i
,\ 
\forall 1\leq i\leq k\}\,.
$$

The classes of Schubert cells form a basis for the cohomology of $X$.
On the other hand, $H^*(X)$ is isomorphic to a certain quotient of the ring of 
symmetric polynomials $\Lambda_k=\mathbb Z[e_1,\ldots,e_k]\\=\mathbb Z[h_1,\ldots,h_k]$, 
where
$$
e_r = \sum_{1\leq i_1<\cdots<i_r} x_{i_1}\cdots x_{i_r}\qquad\text{and}
\qquad
h_r = \sum_{1\leq i_1\leq \cdots \leq i_r} x_{i_1}\cdots x_{i_r}\,.
$$
Bases for $\Lambda$ are indexed by generic partitions
$\lambda=(\lambda_1,\lambda_2,\ldots,\lambda_\ell)$.
A quick example is the basis of homogenous symmetric functions $\{h_\lambda\}$, defined by
$h_\lambda = h_{\lambda_1}\cdots h_{\lambda_\ell}$.

The combinatorial potential of $\Lambda$ is met with the unique association 
of each partition $\lambda$ with its {Ferrers 
shape}, a left-and bottom-justified array of $1 \times 1$ square cells in the first quadrant of the 
coordinate plane, with $\lambda_i$ cells in the $i^{th}$ row from the bottom.  
Given a partition $\lambda$, its conjugate $\lambda'$ is the partition
obtained by reflecting the shape of $\lambda$ about the line $y=x$.  
For partitions $\mu,\lambda$, the property that every cell of $\mu$ 
is also a cell of $\lambda$ is denoted by $\mu\subset\lambda$.  
For $\mu\subset\lambda$, the skew shape $\lambda/\mu$
is defined by the cells in $\lambda$ but not in $\mu$.
The skew shape obtained by placing the diagram of a partition $\lambda$
southeast and caty-corner to a partition $\mu$ is denoted by
$\mu*\lambda$.
For example,
$$
\mu=(2,2,1), \lambda=(3,1)\,\implies\,
\mu*\lambda\,=\,(5,3,2,2,1)/(2,2)\,=\,
{\huge\tableau[pcY]{\cr&\cr&\cr\bl&\bl&\cr\bl&\bl&&&}}
$$
More generally, a {composition} is a sequence of non-negative
integers $\alpha = (\alpha_1, \alpha_2, \ldots, \alpha_k)$,
and $\alpha-\beta$ is defined by usual vector subtraction
for any compositions $\alpha,\beta$. For any composition $\alpha$, let $|\alpha|=\alpha_1+\alpha_2+\cdots+\alpha_k$.

A {\it(semi-standard) tableau} of {\it shape} $\lambda$ is a positive
integer filling of the cells of $\lambda$ such that entries weakly increase from left to right in rows and
are increasing from the bottom to top of each column.
The {\it weight} of a tableau $T$ denoted  $\wt(T)$ is the composition
$\alpha=(\alpha_i)_{i \geq 1}$ where $\alpha_{i}$ is the number of cells
containing $i$ (it is customary to omit trailing $0$'s).  
For any partition $\lambda$, there is a unique tableau 
of both shape and weight $\lambda$.  We denote this tableau
by $T_\lambda$.  We use
$\SSYT(\lambda)$ to denote the set of all semi-standard tableaux of
shape $\lambda$, and $\SSYT(\lambda,\mu)$ to denote the set of all
semi-standard tableaux of shape $\lambda$ and weight $\mu$.

Schur functions may be defined as the weight generating functions of semi-standard tableaux; for any partition $\lambda$,
  \[ s_\lambda = \sum_{T \in \SSYT(\lambda)}^{} \bx^{\wt(T)} \;,\]
where $\bx^{\alpha} = x_1^{\alpha_1} x_2^{\alpha_2} \cdots$.
They are symmetric functions and form a basis for $\Lambda$.
As such, it is convenient to collect terms into the basis of monomial
symmetric functions,
formed by $m_\lambda = \sum_{\alpha}^{} \bx^{\alpha}$
over all distinct rearrangements $\alpha$ of partition $\lambda$.
The monomial expansion of a Schur function is 
\begin{align} \label{eq:stom}
  s_{\lambda} = \sum_{\mu}^{} K_{\lambda\mu}\, m_{\mu}\,,
\end{align}
where the Kostka coefficients, $K_{\lambda,\mu}$,
enumerate tableaux of shape $\lambda$ and weight $\mu$. 

The Schur basis is orthonormal with respect to the Hall inner product
$\langle,\rangle$ on $\Lambda$, defined by
\begin{equation}
\label{sdual}
\langle m_\lambda,h_\mu\rangle = 
  \begin{cases}
    1 \quad \text{ if } \lambda=\mu \\ 0 \quad \text{otherwise.}
  \end{cases}
\end{equation}
An immediate consequence of (\ref{eq:stom}) and duality is 
\begin{align} \label{eq:htos}
  h_\mu = \sum_{\lambda}^{} K_{\lambda\mu}\, s_\lambda \,.
\end{align}

Our methods rely on fundamental operations on words and tableaux
including {jeu de taquin} and $\RSK$-insertion~\cite{S77,R38,S61,K70}.
We briefly recall several important results here;
full details can be found in a variety of texts such as~\cite{LSmonoid,EC2,Ful}.  
A word $w$ is {\it Yamanouchi} when each factorization $w=uv$
satisfies ${\wt}_i(v)\geq {\wt}_{i+1}(v)$ for all $i$, where ${\wt}_i(v)$ 
is the number of times letter $i$ appears in $v$.  For a given partition $\lambda$, 
a word $w$ is {\it $\lambda$-Yamanouchi} if ${\wt}_i(v)+\lambda_i\geq {\wt}_{i+1}(v)+
\lambda_{i+1}$.

The \emph{row word} $\wor(T)=w_1 w_2 \cdots w_n$ of tableau $T$ 
is defined by listing elements of $T$ starting from the top-left corner, 
reading across each row, and then continuing down the rows.  
We say that tableau $T$ is row Yamanouchi when its row word is Yamanouchi.
The RSK insertion algorithm uniquely identifies a word $w$ with 
a pair of same shaped tableaux $(P(w),Q(w))$.  
For any tableau $T$,
\[ P(\wor(T)) = T\,, \]
and when two words $w$ and $u$ have the same insertion tableau $P(w)=P(u)$,
they are \emph{Knuth equivalent}, denoted by $w\sim u$. 


\section{$K$-theoretic Littlewood-Richardson coefficients}

Because the Schubert cells form a cell decomposition of the
Grassmannian $X$, the classes of the structure 
sheaves $\mathcal O_{X_\lambda}$ form a basis for the
Grothendieck ring of $X$.  A combinatorial description for
the structure constants of $K^\circ X$ with respect to its basis 
of Schubert structure sheaves, appearing in 
\begin{equation}
\label{eq:sslr}
[\mathcal O_{X_\lambda}]\cdot[\mathcal O_{X_\mu}] = \sum_\nu
c_{\lambda\mu}^\nu\,[\mathcal O_{X_\nu}]\,,
\end{equation}
was first given in~\cite{Buch}.

\subsection{Combinatorial $K$-theoretic background}

Buch's work initiated a framework for $K$-theoretic Schubert calculus using 
a set-valued generalization of tableaux.  
A {\it set-valued tableau} of shape $\nu/\lambda$ is defined to be
a filling of each cell in the diagram of $\nu/\lambda$ with a non-empty set of positive
integers such that each subfilling created 
from the choice of a single element in each cell is a semi-standard
tableau.
The {\it weight} of a set-valued tableau $S$ is the composition
$\wt(S)=(\alpha_i)_{i \ge 1}$ where $\alpha_i$ is the total number
of times $i$ appears in $S$ and the {\it excess} of $S$ is defined 
by
\[ \varepsilon(S) = |\wt(S)| - |\shape(S)| \;. \]
A {\it multicell} refers to a cell in $S$ that contains more than
one letter.  When $S$ has no multicells, it is viewed as a 
semi-standard tableau. In this case, $|\wt(S)|=|\shape(S)|$ and
$\varepsilon(S) = 0$. 

The collection of all set-valued tableaux of shape $\nu/\lambda$ is 
denoted by $\SVT(\nu/\lambda)$ and the subset of these 
with weight $\alpha$ is $\SVT(\nu/\lambda,\alpha)$.  
For any skew shape $\nu/\lambda$, the {\it stable Grothendieck polynomial}
is defined by
\begin{equation}
\label{the:Gbuch}
    G_{\nu/\lambda} = \sum_{\mu}^{} k_{\nu/\lambda,\mu}\, m_\mu
\,,
\end{equation}
where
$$
k_{\nu/\lambda,\mu} = 
 (-1)^{|\lambda|-|\nu|-|\mu|} 
\sum_{S\in \SVT(\nu/\lambda,\mu)} 1\,.
$$
Through the study of the vector space $\Gamma={\rm span}\{G_\lambda\}$ over
all partitions $\lambda$, Buch proved that the set-valued  generating functions 
give a set of representatives for the classes of $\mathcal O_{X_\lambda}$.
\begin{theorem}[\cite{Buch}]
\label{the:buch}
The structure constants of~\eqref{eq:sslr} occur as coefficients 
in the expansion of the symmetric function product
\begin{equation}
G_\lambda \, G_\mu = \sum_{\nu\atop |\nu|\geq|\lambda|+|\mu|} c_{\lambda\mu}^\nu \,G_\nu\,.
\end{equation}
\end{theorem}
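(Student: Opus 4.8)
The plan is to realize both sides of the claimed identity as images under a single ring homomorphism from the algebra $\Gamma = {\rm span}\{G_\lambda\}$ to the Grothendieck ring $K^\circ X$. The geometric content one needs is that the stable Grothendieck polynomial $G_\lambda$ represents the Schubert structure sheaf class $[\mathcal O_{X_\lambda}]$ whenever $\lambda$ fits inside the $k \times (n-k)$ rectangle, a fact that descends from Lascoux and Sch\"utzenberger's identification of Grothendieck polynomials with structure sheaves in the flag variety, after passing to the stable limit appropriate to the Grassmannian. First I would establish a surjective ring homomorphism
\[
\pi \colon \Gamma \longrightarrow K^\circ X, \qquad \pi(G_\lambda) = [\mathcal O_{X_\lambda}] \text{ for } \lambda \subseteq (n-k)^k,
\]
whose kernel, as a vector space, is spanned by those $G_\nu$ indexed by partitions $\nu$ that do not fit in the rectangle; equivalently, $\pi(G_\nu) = 0$ for every such $\nu$. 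This is the statement that $K^\circ X$ is the corresponding quotient of $\Gamma$, with the classes $[\mathcal O_{X_\lambda}]$ for $\lambda \subseteq (n-k)^k$ forming a $\mathbb Z$-basis.

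Granting the homomorphism, the theorem follows by evaluating $\pi$ on the product formula. Since $\pi$ respects multiplication,
\[
[\mathcal O_{X_\lambda}] \cdot [\mathcal O_{X_\mu}] = \pi(G_\lambda)\,\pi(G_\mu) = \pi(G_\lambda G_\mu) = \pi\Bigl(\sum_\nu c_{\lambda\mu}^\nu\, G_\nu\Bigr) = \sum_{\nu \subseteq (n-k)^k} c_{\lambda\mu}^\nu\, [\mathcal O_{X_\nu}],
\]
where the terms indexed by $\nu \not\subseteq (n-k)^k$ drop out because they lie in $\ker \pi$. Comparing with the defining expansion \eqref{eq:sslr} and using the linear independence of the classes $[\mathcal O_{X_\nu}]$, I read off that the geometric constants coincide with the algebraic coefficients $c_{\lambda\mu}^\nu$ for every $\nu$ in the rectangle, which is exactly the assertion of the theorem.

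It remains to justify the degree constraint $|\nu| \ge |\lambda| + |\mu|$ in the symmetric-function expansion, which I would obtain from the grading. Writing $G_\lambda = s_\lambda + (\text{terms of strictly higher degree})$, the product $G_\lambda G_\mu$ has lowest-degree component $s_\lambda s_\mu$ of degree $|\lambda| + |\mu|$; since each $G_\nu$ begins in degree $|\nu|$, any $G_\nu$ appearing in the expansion must satisfy $|\nu| \ge |\lambda| + |\mu|$, with the minimal-degree coefficients recovering the classical Littlewood-Richardson numbers. I expect the main obstacle to be the first step: verifying that the association $G_\lambda \mapsto [\mathcal O_{X_\lambda}]$ extends to a genuine ring homomorphism onto $K^\circ X$ with the stated kernel. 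This is precisely the point where geometry enters, since one must confirm both that $G_\lambda$ really represents the structure sheaf and that the multiplicative structure of $\Gamma$ matches that of $K^\circ X$; the combinatorial computation of the coefficients $c_{\lambda\mu}^\nu$ themselves is then a separate matter, handled by the Littlewood-Richardson rule.
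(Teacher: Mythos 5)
Your outline follows the route of Buch's original argument, and that is the only meaningful comparison available: the paper itself supplies no proof of this statement (Theorem~\ref{the:buch} is quoted from \cite{Buch}), so the yardstick is Buch's proof, whose architecture yours reproduces --- a surjection $\pi\colon\Gamma\to K^\circ X$ killing the $G_\nu$ with $\nu\not\subseteq(n-k)^k$, followed by a formal push-through of the product expansion. The formal deduction from that surjection is correct, the use of linear independence of the classes $[\mathcal O_{X_\nu}]$ is legitimate (it comes from the cell decomposition, as the paper notes), and your grading argument for the constraint $|\nu|\geq|\lambda|+|\mu|$ (lowest-degree component of $G_\nu$ is $s_\nu$) is exactly right.

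As a standalone proof, however, the proposal has two genuine gaps, only one of which you flag. First, the step you defer --- that $G_\lambda\mapsto[\mathcal O_{X_\lambda}]$ for $\lambda\subseteq(n-k)^k$ and $G_\nu\mapsto 0$ otherwise defines a ring homomorphism onto $K^\circ X$ --- is not a preliminary lemma; it is essentially the entire content of the theorem. Once it is granted, everything that remains is linear algebra, so the proposal amounts to an honest reduction of the theorem to itself. Filling it in requires the Lascoux--Sch\"utzenberger representation of Schubert structure sheaves by Grothendieck polynomials, the identification of the stable polynomial $G_\lambda$ in $k$ variables with the Grothendieck polynomial of the corresponding Grassmannian permutation, and the vanishing statements in the presentation of $K^\circ X$ as a quotient; this is the bulk of the geometric work in \cite{Buch}. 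Second, a gap you do not flag: when you apply $\pi$ to the expansion $G_\lambda G_\mu=\sum_\nu c_{\lambda\mu}^\nu G_\nu$, you are already assuming that the product lies in $\Gamma={\rm span}\{G_\nu\}$ at all, with a well-defined (indeed finite) expansion. The $G_\nu$ are inhomogeneous power series, and closure of their span under multiplication is itself a nontrivial theorem of Buch, proved via his set-valued insertion algorithm; without it the coefficients $c_{\lambda\mu}^\nu$ on the symmetric-function side of \eqref{Klr} are not even defined, so this existence claim must be established before, not after, the homomorphism argument.
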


This inspired Buch to define a column insertion algorithm on set-valued tableaux and 
to give combinatorial meaning to $c_{\lambda\mu}^\nu$; they count the subset of 
set-valued tableaux with a certain Yamanouchi condition~\cite{Buch}.  
Precisely, the {\it column word} 
$\wor(S)$ of a set-valued tableau $S$ is defined by reading cells from top 
to bottom in columns and taking letters within a cell from smallest to largest,
starting with the leftmost column moving right.  When $\wor(S)$ is $\lambda$-Yamanouchi, 
we say $S$ is a \emph{column $\lambda$-Yamanouchi set-valued tableau}.  

\begin{example}
\[S=\tableau[scY]{5\cr 4& 57\cr 12& 2& 23} 
\implies w(S)= 541257223\]
\end{example}

\begin{theorem}[\cite{Buch}]
\label{thm:20}
For partitions $\lambda,\mu$,
$c_{\lambda\mu}^\nu$  is equal to $(-1)^{|\nu|-|\lambda|-|\mu|}$ 
times the number of column $\lambda$-Yamanouchi set-valued tableaux of shape $\mu$ 
and weight $\nu-\lambda$.
\end{theorem}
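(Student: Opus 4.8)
The plan is to reduce the statement to a bijective evaluation of the coefficient of $G_\nu$ in the product $G_\lambda\,G_\mu$. By Theorem~\ref{the:buch} that coefficient is exactly $c_{\lambda\mu}^\nu$, so it suffices to expand the product combinatorially. Writing each factor as its defining signed generating function gives
$$
G_\lambda\,G_\mu=\sum_{A\in\SVT(\lambda)}\;\sum_{B\in\SVT(\mu)}(-1)^{\varepsilon(A)+\varepsilon(B)}\,\bx^{\wt(A)+\wt(B)}\,,
$$
and the goal becomes reorganizing this double sum, with all signs intact, into $\sum_\nu c_{\lambda\mu}^\nu\,G_\nu$. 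This mirrors Sch\"utzenberger's $\RSK$ proof of the classical Littlewood--Richardson rule~\cite{S77}; the two new features are that ordinary tableaux are replaced by set-valued tableaux and that the insertion must track the excess statistic in order to control the signs.

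The engine would be a column insertion of Hecke type on set-valued tableaux: given a set-valued tableau $P$ and a letter $b$, one inserts $b$ to produce a new set-valued tableau, recording at each step whether a cell was created or an existing cell merely absorbed a letter. I would first establish the structural properties this insertion must satisfy. It should be invariant under the appropriate $K$-theoretic refinement of Knuth equivalence, so that the insertion tableau of $\wor(S)$ depends only on the $K$-Knuth class of $S$ (refining the identity $P(\wor(T))=T$ for ordinary $T$); it should give a bijection from pairs $(A,B)$ onto pairs consisting of an insertion tableau $P$ of some shape $\nu$ and a recording tableau; and, most importantly, it should be additive on weight and on excess, so that $\varepsilon(A)+\varepsilon(B)$ is preserved. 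The sign then factors precisely because, for a recording object of shape $\mu$ and weight $\nu-\lambda$, one has $\varepsilon=|\nu-\lambda|-|\mu|=|\nu|-|\lambda|-|\mu|$.

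The heart of the argument is a superstandard characterization of the $\lambda$-Yamanouchi condition. The key lemma to prove, by induction on the insertion steps, is that a set-valued tableau $B$ of shape $\mu$ has $\lambda$-Yamanouchi column word $\wor(B)$ if and only if the $K$-Knuth product of $B$ with the canonical tableau $T_\lambda$ yields the superstandard tableau $T_\nu$, where $\nu=\lambda+\wt(B)$. This generalizes the classical principle that a word $w$ is Yamanouchi exactly when $P(w)=T_{\wt(w)}$; the extension requires understanding how multicells interact with the fixed structure of $T_\lambda$ across a full sequence of insertions. Granting this lemma, I would group the terms of the double sum above by the insertion tableau obtained from $T_\lambda$: the contributions landing on a shape-$\nu$ superstandard tableau are indexed exactly by the column $\lambda$-Yamanouchi set-valued tableaux $B$ of shape $\mu$ and weight $\nu-\lambda$, each carrying sign $(-1)^{\varepsilon(B)}=(-1)^{|\nu|-|\lambda|-|\mu|}$. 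Collecting these yields $\sum_\nu(-1)^{|\nu|-|\lambda|-|\mu|}N_{\lambda\mu}^\nu\,G_\nu$, where $N_{\lambda\mu}^\nu$ is the number of such tableaux, and comparison with Theorem~\ref{the:buch} gives the claim.

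The main obstacle is the insertion itself. Unlike classical $\RSK$, the Hecke-type set-valued insertion is not monotone in size: an inserted letter may either bump and enlarge the shape or be absorbed into an existing cell, raising the excess without changing the shape. Verifying that the resulting correspondence is a genuine bijection---rather than merely injective---while simultaneously matching the excess on both sides is the delicate point, and it is exactly what forces the sign $(-1)^{\varepsilon}$ to distribute correctly across the expansion. The secondary difficulty is the inductive superstandard lemma, where the interaction of multicells with $T_\lambda$ must be controlled well enough to recover the $\lambda$-Yamanouchi condition in the presence of nonzero excess.
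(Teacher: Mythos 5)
Your plan is not the paper's proof: it is, in outline, Buch's original insertion-based argument, whereas the paper obtains Theorem~\ref{thm:20} as a by-product of a dual approach. The paper first shows (Proposition~\ref{prop:21}, via Hopf duality, Lenart's Pieri rule, and Theorem~\ref{the:buch}) that $c_{\lambda\mu}^\nu$ equals the coefficient of $g_\mu$ in the skew function $g_{\nu/\lambda}$, and then computes that coefficient purely combinatorially: reverse plane partitions are transported to augmented tabloids by $\partial$, a crystal structure yields a Schur expansion, Lenart's formula converts this to a signed $g$-expansion over elegant fillings, the bijection $\phi_T$ replaces elegant fillings by set-valued tableaux of fixed inflated weight, and the sign-reversing involution $\tau$ reduces the signed sum to the column $\lambda$-Yamanouchi fixed points. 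No insertion bijection on pairs of set-valued tableaux and no recording objects appear anywhere in that route.

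Judged on its own terms, your proposal has genuine gaps rather than being a complete alternative. First, everything rests on a ``Hecke-type column insertion'' that you never construct; its bijectivity, weight- and excess-additivity, and compatibility with grouping terms into the classes $G_\nu$ are essentially the entire technical content of Buch's paper, so deferring them (as you acknowledge in your final paragraph) leaves the argument empty at its core. Second, one property you postulate is false as stated: in the $K$-theoretic setting the insertion tableau is \emph{not} an invariant of the $K$-Knuth class of a word. $K$-Knuth classes can contain more than one straight-shape tableau, which is precisely why the theory of unique rectification targets was developed; superstandard tableaux $T_\nu$ are such targets, and Buch's own argument proves the statements he needs directly for his specific column insertion rather than appealing to any general invariance. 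A correct execution of your plan must confront this, and it cannot be waved through. Third, there is a logical mismatch between your key lemma and your summation: the double sum runs over all pairs $(A,B)\in\SVT(\lambda)\times\SVT(\mu)$, but your superstandard lemma only characterizes what happens when $\wor(B)$ is inserted into the single tableau $T_\lambda$. You never explain why the fiber of the insertion map over an arbitrary insertion tableau $P$ of shape $\nu$ has signed cardinality $(-1)^{|\nu|-|\lambda|-|\mu|}N_{\lambda\mu}^\nu$ independently of $P$, and that independence is exactly the step that produces a well-defined coefficient of $G_\nu$.
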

The result has since been reproven in a variety of other ways.  Various
combinatorial approaches include~\cite{MC06,IS14,PY15}.

\subsection{Duality}

As discussed, the Littlewood-Richardson coefficients in a product of Schur 
functions are the same as the Schur expansion coefficients of a skew 
Schur function~\eqref{skewschur}.  In contrast, the constant $c_{\lambda\mu}^\nu$ 
in a product of stable Grothendieck polynomials is not the coefficient 
of $G_\mu$ in $G_{\nu/\lambda}$.  However, we have discovered that there is 
a natural remedy.

Lam and Pylyavskyy~\cite{LP07} proved that $\Gamma$ is a Hopf-algebra, and
they studied the family $\{g_\lambda\}$ which is Hopf-dual to $\{G_\lambda\}$.
They proved that this basis, defined by inverting the triangular system
\begin{align} \label{eq:htog2}
  h_\mu &= \sum_{\lambda}^{}  k_{\lambda\mu}
\, g_\lambda\,,
\end{align}
has a direct combinatorial characterization.

Define a {\it reverse plane partition} $R$ of shape $\nu/\lambda$ to 
be a filling of cells in $\nu/\lambda$ with positive integers which are weakly 
increasing in rows and columns.  The {\it weight} of $R$  is
the composition $\wt(R)=(\alpha_i)_{i \ge 1}$ 
where $\alpha_i$ is the total number of columns of $R$ in which $i$ appears.
The collection of all reverse plane partitions of shape $\nu/\lambda$ is denoted 
by $\RPP(\nu/\lambda)$ and the subset of these with weight $\alpha$ is
$\RPP(\nu/\lambda,\alpha)$.  Let $r_{\nu/\lambda,\alpha}=|\RPP(\nu/\lambda,\alpha)|$. 

\begin{theorem}[\cite{LP07}] \label{Thm:LP}
For any partition $\lambda$,
  \begin{align*}
    g_\lambda &= \sum_{R \in \RPP(\lambda)}^{} \bx^{\wt(R)}=
\sum_{\mu} r_{\lambda\mu}\,m_\mu \;.
  \end{align*}
\end{theorem}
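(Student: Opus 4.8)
The plan is to show that the reverse plane partition generating function
$\tilde g_\lambda := \sum_{R\in\RPP(\lambda)}\bx^{\wt(R)}=\sum_\mu r_{\lambda\mu}\,m_\mu$
coincides with the Hopf-dual basis element $g_\lambda$. First I would record the triangularity that makes $\{g_\lambda\}$ well defined: every $S\in\SVT(\lambda)$ has $|\wt(S)|\ge|\lambda|$, with equality exactly when $S$ is semi-standard, and the only set-valued tableau of shape and weight $\lambda$ is $T_\lambda$, so $k_{\lambda\lambda}=1$ and $k_{\lambda\mu}=0$ unless $|\mu|>|\lambda|$ or $|\mu|=|\lambda|$ with $\mu$ dominated by $\lambda$. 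Hence $(k_{\lambda\mu})$ is triangular with unit diagonal with respect to the order ranking partitions by size and then by dominance, and the system $h_\mu=\sum_\lambda k_{\lambda\mu}\,g_\lambda$ determines $\{g_\lambda\}$ uniquely. It therefore suffices to verify that $\{\tilde g_\lambda\}$ satisfies this system, equivalently, since $\{m_\alpha\}$ and $\{h_\alpha\}$ are dual under $\langle\,,\rangle$, to verify the pairing
\[
\langle \tilde g_\lambda,\,G_\mu\rangle=\delta_{\lambda\mu}\,.
\]

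Next I would identify the top-degree component of $\tilde g_\lambda$ as $s_\lambda$: when every column is strictly increasing, the number of columns containing $i$ equals the number of cells containing $i$, so the degree-$|\lambda|$ reverse plane partitions are precisely the semi-standard tableaux and contribute $\sum_{T\in\SSYT(\lambda)}\bx^{\wt(T)}=s_\lambda$, while all other reverse plane partitions have strictly smaller degree. Dually, $G_\mu=s_\mu+(\text{higher degree})$ because the excess-zero set-valued tableaux are the semi-standard tableaux. This confirms that $\{\tilde g_\lambda\}$ is a basis and that the pairing above has the correct leading behaviour.

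The substance of the argument is a Cauchy identity. Because the reproducing kernel $\sum_\lambda u_\lambda(x)\,v_\lambda(y)$ equals $\prod_{i,j}(1-x_iy_j)^{-1}$ for any pair of $\langle\,,\rangle$-dual bases, the pairing $\langle\tilde g_\lambda,G_\mu\rangle=\delta_{\lambda\mu}$ is equivalent to
\[
\sum_\lambda\Bigl(\sum_{R\in\RPP(\lambda)}\bx^{\wt(R)}\Bigr)\,G_\lambda(y)=\prod_{i,j}\frac{1}{1-x_iy_j}\,.
\]
I would prove this bijectively. Expanding the right side over $\mathbb N$-matrices and writing $G_\lambda(y)=\sum_\mu(-1)^{|\mu|-|\lambda|}|\SVT(\lambda,\mu)|\,m_\mu(y)$ on the left, the identity reduces, coefficient by coefficient, to a signed count of pairs $(R,S)$ with $R\in\RPP(\lambda)$ and $S\in\SVT(\lambda)$ of common shape, weighted by $(-1)^{\varepsilon(S)}$. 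The goal is a sign-reversing involution, built from a $K$-theoretic insertion, that cancels every pair with $\varepsilon(S)>0$ and matches the surviving fixed points to integer matrices with the prescribed row and column sums.

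The main obstacle is constructing this involution, and it is genuinely the heart of the matter. The difficulty is that the two statistics being matched are of different natures: the weight of $R$ counts \emph{columns} in which each letter occurs, whereas the weight of $S$ counts \emph{cells} with multiplicity, and the excess $\varepsilon(S)$ supplying the cancelling sign has no direct counterpart on the reverse plane partition side. The involution must therefore track how multicells of $S$ interact with repeated column entries of $R$, removing one unit of excess at a time in a manner compatible with the insertion; one must check carefully that it is well defined on fixed points and is genuinely an involution before identifying the survivors with the monomial expansion of $h_\mu$.
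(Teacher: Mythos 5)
First, a point of context: the paper does not prove this statement at all --- it is quoted from Lam and Pylyavskyy \cite{LP07} --- so your proposal must stand entirely on its own. Its scaffolding is sound: the triangularity of $(k_{\lambda\mu})$ making the system $h_\mu=\sum_\lambda k_{\lambda\mu}g_\lambda$ uniquely solvable, the observation that the top-degree part of $\tilde g_\lambda$ is $s_\lambda$ (so $\{\tilde g_\lambda\}$ is a basis), and the reduction of duality $\langle \tilde g_\lambda, G_\mu\rangle=\delta_{\lambda\mu}$ to the Cauchy-kernel identity $\sum_\lambda \tilde g_\lambda(x)\,G_\lambda(y)=\prod_{i,j}(1-x_iy_j)^{-1}$ are all correct and standard. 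But the proof stops exactly where the theorem's content begins: the sign-reversing involution proving that identity is never constructed, and you say so yourself. Everything preceding it is bookkeeping; without the involution this is a plan, not a proof.

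Moreover, the plan mis-describes what the involution must do, so following it as written would fail. You assert it should ``cancel every pair with $\varepsilon(S)>0$'' and match the surviving fixed points to $\mathbb{N}$-matrices. Test this on the coefficient of $x^{(1)}y^{(1,1)}$: the kernel coefficient is $0$ (no matrix has total row sum $1$ and total column sum $2$), yet there is exactly one pair with $\varepsilon(S)=0$, namely $\lambda=(1,1)$ with $R$ the column of two $1$'s (which has $\RPP$-weight $(1)$) and $S=T_{(1,1)}$. It must cancel against the unique excess-one pair, $\lambda=(1)$ with $R=[1]$ and $S$ the single cell $\{1,2\}$. Hence a correct involution necessarily also consumes pairs in which $S$ is semi-standard but $R$ repeats a letter within a column, it necessarily moves between \emph{different} shapes $\lambda$, and its fixed points are the pairs in which \emph{both} $R$ and $S$ are semi-standard (only then does column-weight equal cell-weight, so that RSK identifies the fixed points with matrices). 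The real difficulty is designing a map that trades one unit of excess in $S$ against one column-repetition in $R$ while adjusting the common shape coherently; this is genuinely hard and is precisely the content of the $K$-theoretic RSK correspondences constructed later in the literature (e.g.\ by Yeliussizov), so it cannot be waved at as a routine verification.
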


We appeal to duality to realize the $K$-theoretic Littlewood-Richardson
numbers instead as coefficients in the $g$-expansion of 
the skew $g$-functions, defined by
\begin{equation}
\label{skewginm}
g_{\nu/\lambda}=\sum_\mu r_{\nu/\lambda,\mu}\, m_\mu\,.
\end{equation}

\begin{prop}
\label{prop:21}
For partitions $\lambda\subseteq\nu$, 
$$
{g}_{\nu/\lambda} = \sum_{\mu} 
c_{\lambda\mu}^{\nu}\, {g}_\mu \, .
$$
\end{prop}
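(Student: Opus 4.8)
The plan is to prove the proposition by duality, taking Buch's product rule (Theorem~\ref{the:buch}) as the sole input and never appealing to a combinatorial description of $c_{\lambda\mu}^\nu$. The first step is to record that $\{G_\lambda\}$ and $\{g_\mu\}$ are dual bases for the Hall pairing, i.e.\ $\langle G_\lambda,g_\mu\rangle=\delta_{\lambda\mu}$. This is built into the Lam--Pylyavskyy setup: writing $G_\lambda=\sum_\alpha k_{\lambda\alpha}m_\alpha$ and inverting the unitriangular system~\eqref{eq:htog2} to get $g_\mu=\sum_\beta (k^{-1})_{\beta\mu}h_\beta$, relation~\eqref{sdual} gives $\langle G_\lambda,g_\mu\rangle=\sum_\alpha k_{\lambda\alpha}(k^{-1})_{\alpha\mu}=\delta_{\lambda\mu}$. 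I would note at the start that every $g_\mu$ and every $g_{\nu/\lambda}$ is a finite integer combination of monomials, hence lies in $\Lambda$, while the $G_\lambda$ live in a completion of $\Lambda$; this keeps all the sums and pairings below well defined. Duality then collapses the proposition to one scalar identity: expanding $g_{\nu/\lambda}=\sum_\mu a_{\lambda\mu}^\nu\,g_\mu$ and pairing against $G_\mu$ gives $a_{\lambda\mu}^\nu=\langle g_{\nu/\lambda},G_\mu\rangle$, so it suffices to show $\langle g_{\nu/\lambda},G_\mu\rangle=c_{\lambda\mu}^\nu$.

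Next I would rewrite the target as an adjunction. By Theorem~\ref{the:buch} and $\langle G_\nu,g_\rho\rangle=\delta_{\nu\rho}$ we have $c_{\lambda\mu}^\nu=\langle g_\nu,\,G_\lambda G_\mu\rangle$, so the goal becomes
\[
\langle g_{\nu/\lambda},G_\mu\rangle=\langle g_\nu,\,G_\lambda G_\mu\rangle,
\]
the statement that skewing an RPP generating function by $\lambda$ is Hall-adjoint to multiplication by $G_\lambda$. I would deduce this from the comultiplication $\Delta f=f(\bx,\mathbf y)$ dual to the product, which obeys $\langle f,pq\rangle=\langle\Delta f,\,p\otimes q\rangle$. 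The engine is the combinatorial coproduct identity
\[
g_\nu(\bx,\mathbf y)=\sum_{\lambda\subseteq\nu} g_\lambda(\bx)\,g_{\nu/\lambda}(\mathbf y), \qquad\text{i.e.}\qquad \Delta g_\nu=\sum_{\sigma} g_\sigma\otimes g_{\nu/\sigma}.
\]
Granting it, the computation closes at once:
\[
\langle g_\nu,\,G_\lambda G_\mu\rangle=\langle\Delta g_\nu,\,G_\lambda\otimes G_\mu\rangle=\sum_\sigma \langle g_\sigma,G_\lambda\rangle\,\langle g_{\nu/\sigma},G_\mu\rangle=\langle g_{\nu/\lambda},G_\mu\rangle,
\]
since $\langle g_\sigma,G_\lambda\rangle=\delta_{\sigma\lambda}$.

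The hard part will be the coproduct identity, precisely because the RPP weight counts \emph{columns} rather than cells. I would prove it by a weight-preserving bijection: totally order the merged alphabet so that every $\bx$-letter precedes every $\mathbf y$-letter, and split an $R\in\RPP(\nu)$ into its $\bx$-valued cells and its $\mathbf y$-valued cells. Monotonicity of $R$ along rows and up columns forces the $\bx$-cells to form a partition subdiagram $\lambda\subseteq\nu$ and the $\mathbf y$-cells to form $\nu/\lambda$, and conversely any $\bx$-valued RPP of $\lambda$ glued to any $\mathbf y$-valued RPP of $\nu/\lambda$ is a legal RPP of $\nu$ because every $\bx$-letter is at most every $\mathbf y$-letter. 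The delicate bookkeeping is the weight: I must verify that for an $\bx$-letter $i$ the number of columns of $\nu$ that contain $i$ equals the number of columns of $\lambda$ that contain $i$ --- the $\bx$-cells of a column form its bottom segment and are exactly that column of $\lambda$ --- and dually for $\mathbf y$-letters and $\nu/\lambda$. This makes the weight monomial of $R$ factor as $\bx^{\wt(R|_\lambda)}\,\mathbf y^{\wt(R|_{\nu/\lambda})}$, and summing over the bijection yields the identity. The only remaining loose end is foundational --- justifying the completed pairing and the adjunction for these inhomogeneous series --- which I would settle either degree-by-degree through the lowest-degree filtration or by invoking the Hopf framework of~\cite{LP07}, where these manipulations are already legitimate.
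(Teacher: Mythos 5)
Your proof is correct, but it closes the key step by a genuinely different route than the paper. Both arguments reduce, via the duality $\langle G_\lambda,g_\mu\rangle=\delta_{\lambda\mu}$ and Buch's product theorem (Theorem~\ref{the:buch}), to the single scalar identity $\langle G_\lambda G_\mu,\,g_\nu\rangle=\langle g_{\nu/\lambda},\,G_\mu\rangle$. The paper establishes this identity by explicit evaluation of both pairings: it inverts the system $g_\nu=\sum_\mu r_{\nu\mu}m_\mu$, transfers the inverse matrix to the $G$-side by duality to get $G_\nu=\sum_\mu \bar r_{\nu\mu}h_\mu$, and then invokes Lenart's Pieri rule $G_\lambda h_\alpha=\sum_\nu r_{\nu/\lambda,\alpha}G_\nu$ as an external input, showing both pairings equal $\sum_\alpha \bar r_{\mu\alpha}\,r_{\nu/\lambda,\alpha}$. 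You instead use the product--coproduct adjunction $\langle f,pq\rangle=\langle \Delta f,\,p\otimes q\rangle$ together with the coproduct formula $\Delta g_\nu=\sum_\sigma g_\sigma\otimes g_{\nu/\sigma}$, which you prove from scratch by the alphabet-splitting bijection on reverse plane partitions; your check that the column-based weight factors correctly across the split (the $\bx$-cells of a column of $\nu$ are precisely its intersection with $\lambda$, so column counts restrict correctly to both $\lambda$ and $\nu/\lambda$) is exactly the point that needed care, and it is right. The two routes are dual to one another: under the adjunction, your coproduct identity is equivalent to the Pieri rule the paper cites. What the paper's computation buys is brevity and no need for any Hopf-theoretic formalism beyond the Hall pairing; what yours buys is self-containedness (no citation of the Pieri rule), a reusable structural statement --- skewing by $\lambda$ on the $g$-side is Hall-adjoint to multiplication by $G_\lambda$ --- and a bijective proof of the coproduct rule, at the modest cost of the degree-by-degree justification of the pairing against the completed ring, which you correctly flag and which is harmless since each $g_\nu$ and $g_{\nu/\lambda}$ has bounded degree.
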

\begin{proof}
Equivalently we show that
$\langle G_\lambda G_\mu,g_\nu\rangle= \langle g_{\nu/\lambda},G_\mu \rangle$. 
Consider \eqref{skewginm} when $\lambda=\emptyset$ and take the 
inverse to get
$$
m_\mu = \sum_\nu \bar r_{\nu\mu}\,g_\nu
\iff G_\nu = \sum_\mu \bar r_{\nu\mu}\,h_\mu
$$
by duality.
From here,
$$
\langle
g_{\nu/\lambda}, G_\mu
\rangle
=
\langle
\sum_{\beta} 
r_{\nu/\lambda,\beta}\, m_\beta, \,
\sum_{\alpha} \bar r_{\mu\alpha} \,h_\alpha
\rangle
=
\sum_{\alpha} \bar r_{\mu\alpha} \,
r_{\nu/\lambda,\alpha}
\,.
$$
On the other hand, the Pieri rule for $\{G_\lambda\}$ \cite[Theorem 3.2]{Lenart} is
\begin{equation}
 G_\lambda\, h_\alpha = \sum_{\nu} r_{\nu/\lambda,\alpha}\,G_\nu\,,
\end{equation}
implying that
\begin{eqnarray*}
\langle
G_\lambda\,G_\mu,\, g_{\nu}
\rangle
& = &
\langle
\sum_{\alpha} \bar r_{\mu\alpha} \, G_\lambda\,h_\alpha,
\,
g_{\nu}
\rangle
=
\langle
\sum_{\alpha} \bar r_{\mu\alpha}
\sum_{\beta} r_{\beta/\lambda,\alpha} G_\beta,
g_{\nu}
\rangle
\\
& = &
\sum_{\alpha} \bar r_{\mu\alpha}
r_{\nu/\lambda,\alpha}
\end{eqnarray*}
\end{proof}

A by-product of Theorem~\ref{thm:20} and Proposition~\ref{prop:21} is that the 
coefficient of $g_\mu$ in $g_{\nu/\lambda}$ is determined by counting column 
$\lambda$-Yamanouchi set-valued tableaux.  This begs for a direct 
proof, starting from the weight generating functions
\begin{equation}
\label{newskewg}
g_{\nu/\lambda}=\sum_\alpha r_{\nu/\lambda,\alpha}\, {\bf x}^\alpha\,,
\end{equation}
and combinatorirally collecting terms to identify the $g$-expansion coefficients
without relying on Theorem~\ref{thm:20} and Proposition~\ref{prop:21}.
Our method is to use a crystal structure which converts \eqref{newskewg} into
a Schur expansion and to then use the transition between Schur functions and 
the $g$-basis given by Lenart~\cite{Lenart}.
That is, {\it strict elegant fillings}
are skew tableaux with strictly increasing entries up columns and 
across rows, and with the additional property that entries
in row $i$ are not larger than $i-1$.
The set of all strict elegant fillings of shape $\lambda/ \mu$ is
denoted by $\EF(\lambda/\mu)$ and we set $F^\lambda_\mu=|\EF(\lambda/\mu)|$.
\begin{theorem}[\cite{Lenart}] \label{Thm:Lenart}
For any partition $\lambda$,
\begin{align}
  \label{lenthmG}
  G_\mu &= \sum_{\lambda:\mu\subset\lambda}^{} (-1)^{ |\lambda|-|\mu| }\,
    F_{\mu}^{\lambda}\, s_\lambda\,.
\end{align}
An immediate corollary comes out of duality relations:
\begin{align}
\label{sing}
  s_\lambda &= \sum_{\mu:\mu\subset\lambda}^{} (-1)^{|\lambda|-|\mu| }\,
    F_{\mu}^\lambda\, g_\mu\,.
\end{align}
\end{theorem}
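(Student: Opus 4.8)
The first identity \eqref{lenthmG} is Lenart's expansion of $G_\mu$ in the Schur basis, which I take as the given input; the real task is to deduce the dual statement \eqref{sing}. My plan is to invoke the transpose principle for dual bases: since the Schur functions are self-dual while $\{G_\lambda\}$ and $\{g_\lambda\}$ form a pair of dual bases under the \emph{same} pairing $\langle\cdot,\cdot\rangle$, the matrix expressing $s_\lambda$ in the $g$-basis is simply the transpose of the matrix expressing $G_\mu$ in the Schur basis. So I expect no new combinatorics here, only a bookkeeping argument.

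Concretely, I would record the two orthogonality relations already available, namely $\langle s_\lambda, s_\mu\rangle = \delta_{\lambda\mu}$ (Schur self-duality) and $\langle G_\lambda, g_\mu\rangle = \delta_{\lambda\mu}$ (the Hopf-duality of Lam--Pylyavskyy), both taken with respect to one and the same extended Hall pairing. Abbreviating the transition coefficients of \eqref{lenthmG} as $A_{\mu\lambda} = (-1)^{|\lambda|-|\mu|}\,F_\mu^\lambda$, so that $G_\mu = \sum_\lambda A_{\mu\lambda}\, s_\lambda$ with $A_{\mu\lambda}=0$ unless $\mu\subset\lambda$ and $A_{\mu\mu}=1$, I want to show the inverse expansion $s_\lambda = \sum_\mu B_{\lambda\mu}\, g_\mu$ satisfies $B_{\lambda\mu}=A_{\mu\lambda}$.

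The computation is short and is the heart of the proof. Pairing the unknown expansion with $G_\nu$ and using $\langle G_\nu, g_\mu\rangle=\delta_{\nu\mu}$ gives $\langle G_\nu, s_\lambda\rangle = B_{\lambda\nu}$. On the other hand, substituting \eqref{lenthmG} and applying Schur self-duality gives
$$
\langle G_\nu, s_\lambda\rangle = \sum_\kappa A_{\nu\kappa}\,\langle s_\kappa, s_\lambda\rangle = A_{\nu\lambda}\,.
$$
Comparing the two evaluations yields $B_{\lambda\mu}=A_{\mu\lambda}=(-1)^{|\lambda|-|\mu|}F_\mu^\lambda$, and since $A_{\mu\lambda}$ vanishes unless $\mu\subset\lambda$, the sum defining $s_\lambda$ runs over $\mu\subset\lambda$, which is exactly \eqref{sing}.

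The only genuine subtlety, and the point I would take care to justify, is the well-definedness of the pairing: $G_\mu$ is an \emph{infinite} sum of Schur functions (over all $\lambda\supset\mu$), so $\langle G_\nu, s_\lambda\rangle$ must be read in the completion in which $\Gamma$ lives and computed degree by degree. What rescues the argument is the triangularity already built into \eqref{lenthmG}: the support condition $\mu\subset\lambda$ forces $|\mu|\le|\lambda|$, with $A_{\mu\mu}=1$, so the transition matrix is uni-triangular, each coefficient $B_{\lambda\mu}$ is extracted from a finite sum, and the matrix is invertible. Once this triangularity is noted, the transpose argument is valid and \eqref{sing} is indeed immediate from \eqref{lenthmG}.
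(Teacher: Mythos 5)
Your proposal is correct and matches the paper's treatment: the paper cites Lenart for \eqref{lenthmG} and asserts that \eqref{sing} ``comes out of duality relations,'' which is exactly the transpose-of-dual-bases computation you carry out using $\langle s_\kappa,s_\lambda\rangle=\delta_{\kappa\lambda}$ and $\langle G_\nu,g_\mu\rangle=\delta_{\nu\mu}$. Your added remark on well-definedness of the pairing (degree-by-degree evaluation justified by the unitriangular support condition $\mu\subset\lambda$) is a sound filling-in of what the paper leaves implicit.
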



\section{Tabloids}

The process of combinatorially tracing this through 
seems daunting upon first glance because two of the transitions
involve signs.  However, converting from the setting of reverse plane 
partitions to one involving a combinatorial structure called tabloid
enables  us to carry it through seamlessly.

\subsection{Inflated weight characterization for Yamanouchi tabloid}

{\it Tabloids} are fillings with positive integers that are weakly increasing in rows.
Although these are classical combinatorial objects going back to Young's study 
of the irreducible representations of $S_n$,
our purposes require an association of tabloids
(and set-valued tableaux) with a less familiar weight called the inflated weight.
We start with the definition and several results establishing that
the notion of this particular weight is closely tied to 
the Yamanouchi property.

Let $\mathcal T(\nu/\lambda)$ denote the set of tabloids with
shape $\nu/\lambda$.  The {\it weight} of a tabloid $T$ is the 
vector $\wt(T)=\alpha$ where $\alpha_i$ records the number of
times $i$ appears in $T$.  The subset $\mathcal T(\nu/\lambda,\alpha)
\subset \mathcal T(\nu/\lambda)$ contains only tabloids of weight $\alpha$. We require another statistic which can be associated to a tabloid to further our investigation. 

\begin{definition}
\label{def:iwt}
The {\it inflated weight} of a tabloid $T$, $\fwt(T)$, is defined iteratively 
from its rows $T_1, T_2,\ldots, T_\ell$, read from bottom to top. 
With $r=2$ and $\hat T=T_1$,
modify $T_r*\hat T$ by moving the last letter $e$ in $T_r$ to the
rightmost empty cell of row $r$ that has no entry $e'\geq e$ 
below it in any row $r'\leq r-1$. If no such cell exists, $e$ remains in place.
Now ignoring $e$, repeat with the last letter in row $r$.
Once all letters in row $r$ have been addressed, iterate by 
setting the resulting filling to $\hat T$ and $r:\!=r+1$.
When $r=\ell$, the process terminates with a column-strict filling $\hat T$ 
called the {\it inflated weight tableau} of $T$. We refer to the entries at the top of each column in $\hat T$ as {\it uncovered}.
The inflated weight is the conjugate of the partition rearrangement of
the uncovered entries in $\hat T$.
\end{definition}

The set of tabloids in $\mathcal T(\nu/\lambda)$  with inflated weight $\alpha$
is denoted by $\mathcal T(\nu/\lambda,\fwt=\alpha)$.

\begin{example}
\[T={\scriptsize\tableau[scY]{1 \cr 2&3&6\cr 1&2&4&5&7} 
\rightarrow \tableau[scY]{1\cr \bl& 2&3&6 \cr \bl&\bl&\bl&\bl&1&2&4&5&7}
\rightarrow \hat T=\tableau[scY]{1\cr \bl& 2&3&\bl&6 \cr \bl&1&2&4&5&7}
}\]
implying that $\fwt(T)=(7,6,4,3,2,1)'=(6,5,4,3,2,2,1)$
\end{example}

This example demonstrates that the weight of a tabloid does not 
necessarily equal its inflated weight.  It is possible for
weight and inflated weight to match; in fact, this characterizes the
Yamanouchi property on tabloid.
The {\it row word} of a tabloid $T$ is defined by reading letters of $T$
from left to right in rows, starting with the top row and proceeding
south.  A tabloid $T$ is {\it row Yamanouchi} when its row word is Yamanouchi.

\begin{example}
For tabloid $T={\scriptsize\tableau[scY]{1\cr1&2}}$\,, consider
$T*T_{(3,1)}$.  Its row word 1122111 is Yamanouchi and
$\wt(T*T_{(3,1)})=(5,2)$.
The inflated weight computation also gives $(5,2)$.
\[T*T_{(31)}={\scriptsize\tableau[scY]{1\cr1&2\cr\bl&\bl&2\cr\bl&\bl&1&1&1} 
\rightarrow \tableau[scY]{1\cr \bl&1&2 \cr \bl&\bl&\bl&2\cr\bl&\bl&\bl&\bl&1&1&1}
\rightarrow \tableau[scY]{1\cr\bl&1&\bl&2\cr \bl&\bl&\bl&\bl&2\cr\bl&\bl&1&1&1}
}
\quad \implies \quad
\fwt(T*T_{(31)})=(2,2,1,1,1)'
\,.
\]
\end{example}

\begin{prop}
\label{lem:tabdweightyam}
For any tabloid $T$ and partition $\lambda$,
$$\text{
$T*T_\lambda$ is row Yamanouchi $\;\iff\;$ $\fwt(T*T_\lambda)=\wt(T*T_\lambda)$.
}
$$
\end{prop}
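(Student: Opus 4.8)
The plan is to prove both directions by carefully tracking how the inflated-weight procedure of Definition~\ref{def:iwt} interacts with the Yamanouchi condition on the row word of $T*T_\lambda$. The key observation I would exploit is that the inflated-weight tableau $\hat T$ is built by moving each letter $e$ rightward into the first column where no entry $e'\geq e$ already sits below it in rows $r'\leq r-1$. This is precisely a greedy placement that \emph{keeps the entries along each column weakly ordered} (in fact column-strict, as the definition asserts), and the ``uncovered'' entries at the tops of columns record, column by column, the largest value seen. The engine of the argument is that a letter $e$ stays in place (fails to move, hence contributes to inflating the weight only trivially) exactly when the partial counts of letters $\geq e$ versus $<e$ satisfy the Yamanouchi inequality; when $e$ must move, it signals a local violation of Yamanouchi that the inflation is ``absorbing.''

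First I would set up the forward direction ($\Rightarrow$). Assume $w(T*T_\lambda)$ is Yamanouchi. I would prove by induction on the rows (processing $r=2,3,\ldots,\ell$ bottom to top, mirroring the iterative definition) that \emph{no letter ever moves}: when the reading word up through row $r$ is Yamanouchi, the count of each letter $e$ appearing weakly below the current position already dominates the count of $e{+}1$, so the condition ``no $e'\geq e$ below it'' forces the natural column, and $e$ remains in place. Since nothing moves, $\hat T$ is just $T*T_\lambda$ packed into columns, every entry is uncovered in the obvious sense, and the conjugate of the column-content partition equals $\wt(T*T_\lambda)$. I would phrase this cleanly using ${\wt}_i(v)\geq {\wt}_{i+1}(v)$ applied to the suffixes $v$ that correspond to the reading position during the sweep.

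For the converse ($\Leftarrow$), I would argue the contrapositive: if $w(T*T_\lambda)$ is \emph{not} Yamanouchi, there is a first violation, i.e.\ a suffix $v$ and an index $i$ with ${\wt}_i(v) < {\wt}_{i+1}(v)$. I would locate the letter realizing this failure and show that the inflation procedure is forced to \emph{move} it: there exists a cell to its right with no entry $\geq e$ below, precisely because the deficit of $i$'s relative to $(i{+}1)$'s opens up such a slot. A single genuine move strictly changes the multiset of uncovered entries, so the conjugate partition rearrangement of the uncovered entries differs from $\wt(T*T_\lambda)$, giving $\fwt(T*T_\lambda)\neq\wt(T*T_\lambda)$. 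The appended $T_\lambda$ is what makes the counting align with the $\lambda$-shifted Yamanouchi bookkeeping, so I would keep track of the $\lambda_i$ contributions throughout rather than treating $T$ in isolation.

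The main obstacle I anticipate is the bookkeeping in the converse direction, specifically showing that the \emph{first} failure of the Yamanouchi condition translates into an actual rightward move under the fairly intricate rule (move to the rightmost empty cell having no entry $e'\geq e$ below in rows $r'\leq r-1$), and that this move provably alters the uncovered multiset rather than being compensated by a later move. The difficulty is that the procedure is sequential and a single letter's displacement can cascade; I would control this by proving a monotonicity invariant, namely that $\sum_{j\geq e}(\text{number of uncovered entries equal to } j)$ computed after processing row $r$ equals the corresponding suffix count of the reading word through row $r$ if and only if no move has yet occurred, so that the first move is detected as the first discrepancy between these two quantities. Establishing that invariant carefully, and confirming it survives the packing of partially-emptied rows, is where the real work lies.
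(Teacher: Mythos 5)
There is a genuine gap, and it is in the central mechanism of your argument. Your forward direction rests on the claim that when $w(T*T_\lambda)$ is Yamanouchi, \emph{no letter ever moves} during the inflation procedure. This is false: letters move in the Yamanouchi case too, and indeed they must. In the paper's own example, $T*T_{(3,1)}$ with Yamanouchi row word $1122111$, the $2$'s and the upper $1$'s all slide rightward; even in the base case $T_\lambda$ alone, each letter $i>1$ starts caty-corner to the northwest of $\hat T$ (that is what $T_r*\hat T$ means) and must travel right to stack above an $i-1$. Moreover, even granting the false premise, your conclusion does not follow: if every entry were uncovered, then $\fwt(T*T_\lambda)$ would be the conjugate of the partition formed by \emph{all} entries, not the weight. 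For $T_{(3,1)}$ that would give $(2,1,1,1)'=(4,1)\neq(3,1)=\wt(T_{(3,1)})$. The equality $\fwt=\wt$ requires exactly the opposite of "everything uncovered": it requires that for each $x$ the number of uncovered entries $\geq x$ equal the number of $x$'s, which happens precisely when every letter $x>1$ ends up stacked on top of an $x-1$, i.e., when every column of the inflated weight tableau is \emph{interval valued}, containing exactly $1,2,\ldots,x$.

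This is the invariant the paper's proof tracks, and it is what your "move vs.\ no move" dichotomy should be replaced by. In the forward direction, the Yamanouchi condition guarantees strictly more $(b-1)$'s than $b$'s below the current letter $b$; combined inductively with interval-valued columns, at least one $b-1$ is uncovered, so $b$ lands on the rightmost such column and interval-valuedness persists. In the converse, $\fwt=\wt$ forces interval-valued columns by the counting argument above, whence every $x>1$ sits above an $x-1$ in its column, so every $x$ in the reading word pairs with an $x-1$ to its right and the word is Yamanouchi (here one also uses that inflation permutes letters only within their rows, so $w(T*T_\lambda)$ equals the reading word of the inflated tableau). Your proposed monotonicity invariant for the converse — detecting the first Yamanouchi violation as the first move — collapses for the same reason: moves occur immediately in both the Yamanouchi and non-Yamanouchi cases, so they carry no signal. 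What distinguishes the two cases is whether a moved letter $e$ lands on an uncovered $e-1$ or on something strictly smaller.
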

\begin{proof}
Consider a partition $\lambda=(\lambda_1,\ldots,\lambda_\ell)$.
Take $T*T_\lambda$ to be row Yamanouchi and note that the bottom row 
in its inflated weight tableau $D$ is a row with $\lambda_1$ ones. 
Thus, to prove that $\fwt(T*T_\lambda)=\wt(T*T_\lambda)$, it suffices 
to show that $D$ has {\it interval valued columns}.  That is, 
we claim that the column of an uncovered entry $x$ in $D$ 
contains precisely $1,2,\ldots,x$.

This condition holds on $T_\lambda$ since the rows in its inflated
weight tableau are {\it right} justified and row $i$ is made up of
$\lambda_i$ $i$'s.  Next is the construction of row $\lambda_\ell+1$ in 
the inflated weight tableau of $T*T_\lambda$, beginning by sliding entry 
$b$ into the rightmost cell $(\lambda_\ell+1,j)$ such that all entries 
in column $j$ are smaller than $b$.  
By the Yamanouchi property on $T*T_\lambda$, there are
strictly more $b-1$'s than $b$'s in lower rows
and at least one is uncovered since columns are interval valued.
Therefore, the rightmost column with an uncovered $b-1$ is 
the column $j$ into which $b$ is moved and it remains interval 
valued.  
The same argument applies for the
next letter $a$ in row $\lambda_\ell+1$ since $a\leq b$.  By
iteration, columns in the inflated weight tableau are interval valued.

On the other hand, consider $T*T_\lambda$ with the property that 
$\fwt(T)=\wt(T)$.  Since columns of the inflated weight tableau $D$ 
are strictly increasing, every column with an entry $x$ must contribute 
to the number of uncovered entries in $D$ which are at least $x$.
Therefore, columns of $D$ must be interval valued because
$\fwt(T)=\wt(T)$ implies that the number of uncovered entries 
in $D$ which are at least $x$ equals the number of times $x$ occurs 
in $D$. 
Every $x>1$ in $D$ 
must then lie above an $x-1$ in the same column.  This implies that 
every $x$ in $\wor(D)$ can be paired with an $x-1$ to its right.
Therefore, $\wor(T*T_\lambda)=\wor(D)$ is Yamanouchi as claimed.
\end{proof}

We define the notion of inflated weight on a set-valued tableau $S$ so 
that it reduces to Definition~\ref{def:iwt} when $S$ has no multicells.

\begin{definition} 
The \emph{inflated weight} of  a set-valued tableau $S$ 
is defined iteratively from its rows $S_1, S_2,\ldots, S_\ell$,
read from bottom to top.  With $r=2$ and $\hat S=S_1$,
starting from the last cell $c$ in $S_r$, modify $S_r*\hat S$ as follows:

\begin{enumerate}
\item move the largest entry $e$ in cell $c$ to the rightmost {\it empty} cell $(r,j)$
such that all entries in cells $(r',j)$ for all $r'<r$ are smaller than $e$.
If no such cell exists, $e$ remains in place.

\item move the next largest entry $e_2$ in cell $c$
to the rightmost cell $(r,\hat j)$, where $\hat j\leq j$,
such that all entries in cells $(r',\hat j)$ for all $r'<r$ are less than $e_2$.
Let $j=\hat j$ and repeat this step on all remaining entries in cell  $c$.

\item repeat from step 1 on the cell $\hat c$ just west of cell $c$.
\end{enumerate}

\noindent 
When all cells in row $r$ have been addressed, iterate by setting the resulting filling 
to $\hat S$ and $r :=r+1$.
The process terminates when $r=\ell$, 
with a column-strict set-valued filling $\hat S$
called the {\it inflated weight tableau} of $S$.
The inflated weight is the conjugate of the partition rearrangement of
the list of maximal entries in uncovered cells of $\hat S$. 
\end{definition}

\begin{example}
$$
S=\tiny\tableau[mcY]{48&9\cr1&5} \mapsto \tableau[mcY]{48&9\cr1&5}
\qquad
\implies\fwt(S)=(9,8)'=(2,2,2,2,2,2,2,2,1)
$$
$$
S= \tiny \tableau[mcY]{478&9\cr1&5&6} \mapsto \tableau[mcY]{478&\bl&9\cr1&5&6}\mapsto
\tableau[mcY]{4&78&9\cr1&5&6}
\implies\fwt(S)=(9,8,4)'=(3,3,3,3,2,2,2,2,1)
$$
\end{example}

\begin{remark}
The inflated weight of a set-valued tableau could also have been defined as an iterative process on its columns. Reading the columns $S_1,S_2,\ldots,S_{\lambda_1}$ from left to right, first consider just the sub-diagram with columns $S_{\lambda_1-1}$ and $S_{\lambda_1}$.  Perform  the moves described above starting with the lowest cell of $S_{\lambda_1-1}$ and working upwards. Repeat this process with the cells of $S_{\lambda_1-2}$ and proceed in this fashion until all columns are exhausted.
Note that any cell $c=(r,j)$ in a set-valued tableau has 
entries which are strictly larger than the entries in
cell $(r',j')$ for $r'\leq r, j'\leq j$. 
If we instead construct the inflated tableau by rows, 
all entries of $c$ necessarily move further to the right 
than the entries in  $(r',j')$.  With this in mind one can see the resulting inflated weights to be equal.
\end{remark}


\begin{prop}
\label{prop:svtdwtyam}
For partition $\lambda$ and set-valued tableau $S$,
if $S*T_\lambda$ is column Yamanouchi then
$\wt(S*T_\lambda)=\fwt(S*T_\lambda)$ and the columns of
the inflated weight tableau of $S*T_\lambda$ are 
interval valued.
\end{prop}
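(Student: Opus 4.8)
The plan is to run the set-valued analogue of the forward direction of Proposition~\ref{lem:tabdweightyam}, with two adjustments: the hypothesis now controls the column word rather than the row word, and cells may carry several letters. First I would record that $S*T_\lambda$ is column Yamanouchi exactly when $S$ is column $\lambda$-Yamanouchi. Since $S$ sits caty-corner to the northwest of $T_\lambda$, its columns are read first, so $\wor(S*T_\lambda)=\wor(S)\,\wor(T_\lambda)$; as $\wor(T_\lambda)$ is a Yamanouchi word of weight $\lambda$, appending it turns the $\lambda$-Yamanouchi condition on $\wor(S)$ into the plain Yamanouchi condition on the concatenation. Next I would isolate the cheap implication: once the columns of the inflated weight tableau $D$ are interval valued, $\wt(S*T_\lambda)=\fwt(S*T_\lambda)$ is automatic. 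Indeed the cells of a set-valued column are strictly nested, so an interval-valued column carries each of $1,2,\dots,M$ exactly once, where $M$ is its maximal (hence uncovered) entry; thus the number of $x$'s in $D$ equals the number of columns of maximum at least $x$. Because inflation preserves content and the uncovered maxima are precisely the column maxima, this one count is simultaneously $\wt(S*T_\lambda)_x$ and the $x$-th part of the conjugate of the uncovered-maximum partition, namely $\fwt(S*T_\lambda)_x$. The proposition thus reduces to the interval-valued claim.

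To establish interval-valuedness I would induct along the inflation, but run it columnwise from right to left---legitimate by the remark identifying the row and column constructions of $\fwt$---so that the already-processed cells occupy a block of right-hand columns, which is exactly a suffix of the column word. The base case is the $T_\lambda$ block at the bottom-right: its rows inflate to right-justified constant rows, producing interval-valued columns whose maxima weakly increase from left to right. I would carry this last feature as an induction invariant: at every stage the partial inflated tableau has interval-valued columns with weakly increasing maxima. When a letter $e$ is next moved rightward, the rule sends it to the rightmost column all of whose entries lie below $e$, i.e. the rightmost column of maximum at most $e-1$. The column-Yamanouchi hypothesis, read on the relevant suffix, supplies more $(e-1)$'s than $e$'s among the processed entries, so by interval-valuedness some $(e-1)$ is uncovered; monotonicity of the maxima then forces the target column to have maximum exactly $e-1$, and stacking $e$ on it preserves both interval-valuedness and the weak monotonicity of the maxima.

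The hard part will be the multicells. For a cell with entries $e_1>e_2>\dots>e_p$ the procedure places them into weakly decreasing columns $j_1\ge j_2\ge\cdots$, and I must check that each $e_i$ again caps an uncovered $e_i-1$. The monotonicity invariant does most of the work: since columns of smaller maximum lie further left, the globally rightmost uncovered $(e_i-1)$ automatically satisfies $j_i\le j_{i-1}$, and the ``rightmost'' choice keeps every column strictly between $j_i$ and $j_{i-1}$ at maximum in $[e_i,\,e_{i-1}-1]$, so monotonicity survives each intra-cell step. The genuinely delicate point---where the argument goes beyond the single-valued Proposition~\ref{lem:tabdweightyam}---is tracking the Yamanouchi supply as the sweep moves leftward across the columns of $S$: letters equal to a common value $e$ now arise in different columns processed at different stages, each must consume a distinct uncovered $(e-1)$, and each placement converts an uncovered $(e-1)$ into an uncovered $e$. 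I expect to control this by processing in the order dictated by the inflation (lowest cell first, largest letter first) and showing that the suffix form of the column-Yamanouchi inequality---which already counts the repeated letters contributed by the multicells---keeps the running number of available $(e-1)$'s nonnegative throughout. Verifying that this tally never runs a deficit, uniformly over all values $e$ and through every multicell, is the crux.
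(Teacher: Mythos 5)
Your proposal is, in substance, the paper's own proof: the paper likewise runs the columnwise (rightmost-column-first, lowest-cell-first, largest-letter-first) construction of the inflated weight tableau, starts from the right-justified interval-valued tableau of $T_\lambda$, and at each placement invokes the column-Yamanouchi condition to produce an uncovered $e-1$ onto which $e$ is stacked, with the smallest-to-largest within-cell reading order of the column word handling multicells; your only deviations are minor refinements --- you derive $\wt=\fwt$ from interval-valuedness by a direct count rather than by the paper's reduction to Proposition~\ref{lem:tabdweightyam}, and you make explicit the weakly-increasing-column-maxima invariant that the paper uses only implicitly (it is genuinely needed, since without it the ``rightmost column with all entries below $e$'' could have maximum $<e-1$ and break interval-valuedness). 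The ``crux'' you defer in your last paragraph is already closed by your own setup and is all the paper itself does: because the processing order is exactly the reverse of the column word, the processed letters at every moment form a suffix $v$ and the next letter $e$ extends it to the suffix $ev$, so the Yamanouchi inequality gives $\wt_{e-1}(v)\ge \wt_e(v)+1$ and hence an uncovered $e-1$ at every single step, multicell or not --- no separate global tally argument is required.
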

\begin{proof}
Once again the results hold for $T_\lambda$ since the rows in its inflated
weight tableau are {\it right} justified and row $i$ is made up of
$\lambda_i$ $i$'s.
We thus consider the column construction of the inflated weight tableau 
$D$ of a column Yamanouchi set-valued tableau $S*T_\lambda$.  Let $b$ be the largest entry in the most southeastern cell of $S$. Starting with the
inflated weight tableau $\hat D$ of $T_\lambda$, note that the
Yamanouchi condition on $S*T_\lambda$ implies there are strictly 
more $b-1$'s than $b$'s. At least one $b-1$ is uncovered since columns 
of $\hat D$ are interval valued.  Therefore, 
the rightmost column $j$ with an uncovered $b-1$ is the column 
into which $b$ is moved and it remains interval valued.  
If $b$ came from a multicell in $S$, an entry $a<b$ moves next.
The order in which entries are read from a multicell 
when checking the Yamanouchi property ensures that there is 
an uncovered $a-1$ in $\hat D$ (in some column $\hat j<j$).
Entry $a$ is thus placed in $(\lambda_\ell+1,\hat j)$ and
columns retain interval values.   The 
argument applies to entries taken from biggest to smallest in multicells
and then moving up the column.  Note that the columns of
$D$ are not only interval valued, but there are no multicells.

Since $D$ has no multicells, it also corresponds to the inflated
weight tableau of $T*T_\lambda$ where $T$ is the tabloid with
row $i$ made up of the entries of row $i$ in $S$.
The proof of Proposition~\ref{lem:tabdweightyam} shows
that $D$ has interval valued columns if and only if
$\fwt(T*T_\lambda)=\wt(T*T_\lambda)$.  The claim follows
by noting that $\fwt(T*T_\lambda)=\fwt(D)$ and $\wt(T*T_\lambda)=\wt(S*T_\lambda)$.
\end{proof}

Inflated weight also exposes a correspondence between reverse plane partitions and a 
distinguished subset of skew tabloid.
Precisely, for composition $\alpha$ and partitions $\lambda$ and $\nu$,
let
$$
\T_\lambda(\alpha,\fwt=\nu) = \{T*T_\lambda
: T\in\T(\alpha)\;\text{and}\;
\fwt(T*T_\lambda)=\nu\}\,.
$$
We refer to $T\in \T_\lambda$ as a {\it $\lambda$-augmented tabloid}. 
The subset $\SSYT_{\!\!\lambda}$ of elements in $\mathcal T_\lambda$ 
which are semi-standard tableaux will be of particular interest.

\begin{definition}
For any $R\in\RPP(\cdot/\lambda\,,\alpha)$, let $T$ be the tabloid of shape $\alpha$ 
where entries of row $x$ record the row positions of the subset of $x\in R$
which do not lie below an $x$ in the same column, for $x=1,\ldots,\ell(\alpha)$.
Set $\partial(R)=T*T_\lambda$.
\end{definition} 

\begin{example}
\[ \partial \left(\,\tiny{\tableau[mcY]{\textcolor{red}{1}\cr1&\textcolor{orange}{3}\cr1&\textcolor{red}{1}&\textcolor{blue}{2}\cr\bl&\bl&2&\textcolor{blue}{2}}}\,\right)= \tiny{\tableau[mcY]{\textcolor{orange}{3}\cr\textcolor{blue}{1}&\textcolor{blue}{2}\cr\textcolor{red}{2}&\textcolor{red}{4}\cr\bl&\bl&1&1}}\]
\end{example}

\  \\


\begin{prop}
\label{lem:genomic}
For any skew partition $\nu/\lambda$ and composition $\alpha$, 
the map $\partial$ gives a bijection 
$$\RPP(\nu/\lambda,\alpha)
\quad
\longleftrightarrow
\quad
\T_\lambda(\alpha,\fwt=\nu)\,.
$$
\end{prop}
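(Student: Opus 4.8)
The plan is to produce an explicit inverse of $\partial$ and check that the two maps compose to the identity in both directions. First I would verify that $\partial$ lands in $\T_\lambda(\alpha,\fwt=\nu)$. The recording tabloid $T$ has shape $\alpha$ for a clean reason: in any single column of $R$ the cells holding a fixed value $x$ form one contiguous run, since columns of a reverse plane partition weakly increase, so that column contributes exactly one uncovered $x$ (the top of its run). Hence the number of uncovered $x$'s equals the number of columns of $R$ in which $x$ occurs, which is precisely $\alpha_x$. Recording these row positions in increasing order makes every row of $T$ weakly increasing, so $T\in\T(\alpha)$ and $\partial(R)=T*T_\lambda$ is a genuine $\lambda$-augmented tabloid.

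The heart of well-definedness is the identity $\fwt(T*T_\lambda)=\nu$, and here is the mechanism I would exploit. The topmost cell of each column $c$ of $R$ is always uncovered, and since $R$ has shape $\nu/\lambda$ that cell occupies row $\nu'_c$; thus the row position $\nu'_c$ is one of the entries recorded in $T$. I would then show that the inflated-weight tableau $\hat D$ of $T*T_\lambda$ regroups the recorded entries by the columns of $R$, placing all uncovered entries coming from column $c$ into a single column of $\hat D$ with the top value $\nu'_c$ on top (empty columns of $R$ being supplied by the base $T_\lambda$, whose column $c$ already tops out at $\lambda'_c=\nu'_c$). Granting this, the uncovered entries of $\hat D$ are exactly $\nu'_1,\nu'_2,\ldots$, whose partition rearrangement is $\nu'$, so $\fwt(T*T_\lambda)=(\nu')'=\nu$.

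For the inverse I would run this picture backwards. Given $S:=T*T_\lambda\in\T_\lambda(\alpha,\fwt=\nu)$, I compute its inflated-weight tableau $\hat D$, which canonically assigns each recorded pair (a value $x$ sitting in a row $p$) to a column. Reading off, within each column, the values $x_1<\cdots<x_m$ together with their rows $p_1<\cdots<p_m$, I rebuild a column of a reverse plane partition by filling the run of value $x_i$ into rows $p_{i-1}+1,\ldots,p_i$, with $p_0$ taken to be the inner-shape boundary $\lambda'_c$. The hypothesis $\fwt=\nu$ is exactly what forces the reconstructed columns to assemble into the skew shape $\nu/\lambda$: it pins the top entries to the conjugate parts $\nu'_c$, hence the column lengths to those of $\nu$, while the bottom rows are those introduced by $T_\lambda$ of shape $\lambda$. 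Composing the two maps then returns the starting object: from $R$, the inflation reproduces its columns and the run-reconstruction rebuilds $R$; from $S$, extracting uncovered entries returns $T$.

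I expect the main obstacle to be the regrouping claim that drives both directions: that the sliding rule of the inflated-weight definition really does gather the uncovered entries of each column of $R$ into one column of $\hat D$ topped by $\nu'_c$, and that this assignment is reversible, so that an arbitrary tabloid with $\fwt=\nu$ reconstructs an honest reverse plane partition of shape exactly $\nu/\lambda$. The difficulty is structural: the tabloid $T$ deliberately forgets the column coordinate of each uncovered cell, and only the inflation restores it. Carrying this out will require an induction on the sliding rule that tracks how each entry's target column is pinned down by the entries strictly below and to its left, in the same spirit as the interval-valued analysis of Propositions~\ref{lem:tabdweightyam} and~\ref{prop:svtdwtyam}.
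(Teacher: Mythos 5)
Your overall architecture matches the paper's --- first verify that $\partial$ lands in $\T_\lambda(\alpha,\fwt=\nu)$ (your shape-$\alpha$ argument via contiguous runs in columns is exactly the paper's), then exhibit an explicit inverse --- but there is a genuine gap at precisely the point where all the content of Proposition~\ref{lem:genomic} lives: the ``regrouping claim.'' You assert that the inflated-weight tableau $\hat D$ of $T*T_\lambda$ gathers the recorded run-top rows of each column $c$ of $R$ into a single column topped by $\nu'_c$, correctly observe that both well-definedness and invertibility follow from this, and then defer it (``carrying this out will require an induction on the sliding rule''). No checkable form of that induction is given, and it is not routine: the sliding rule pushes each entry as far \emph{right} as possible, so the correspondence between columns of $R$ and columns of $\hat D$ is order-reversing, and the reason a new entry lands on the representative of \emph{its own} column rather than overshooting is that the intermediate shapes are partitions --- columns of $R$ to the left of $c$ are at least as tall as column $c$, which is what blocks the slide. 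Without this argument, neither $\fwt(\partial(R))=\nu$ nor the reconstruction of an honest element of $\RPP(\nu/\lambda,\alpha)$ from an arbitrary tabloid with $\fwt=\nu$ is established, so what you have is a correct plan rather than a proof.

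For comparison, the paper closes exactly this gap by induction on the maximal value: with $R^i$ the restriction of $R$ to letters $\leq i$ and $\nu^i/\lambda=\shape(R^i)$, the inflation of $T*T_\lambda$ is built row by row from the bottom, and row $i$ of $T$ records precisely the run-tops of the letter $i$, so the partial inflation after processing row $i$ is the inflated-weight tableau of $\partial(R^i)$; the inductive invariant is that its uncovered entries are the column heights of $\nu^i$, and one checks that the new entries $r_1\geq\cdots\geq r_t$ (run-top rows of $i$, read from the leftmost column eastward) slide onto the representatives of their own columns, using that $\nu^{i-1}$ and $\nu^i$ are partitions. Note also that your inverse differs from the paper's: the paper greedily refills the known shape $\nu/\lambda$ value by value (leftmost empty cell in the recorded row, then fill downward in that column), making no reference to the inflation, whereas your reconstruction from the columns of $\hat D$ additionally requires the assembled filling to have weakly increasing \emph{rows} --- one more verification riding on the deferred claim that you do not mention.
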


\begin{proof}
Let $R\in\RPP(\nu/\lambda,\alpha)$ and $\partial(R)=T*T_\lambda$. We will first show that $\partial$ produces a tabloid with the correct shape and inflated weight. The $i\th$ component of the weight $\alpha$ of $R$ is determined by counting the number 
of columns containing an $i$.  On the other hand, one $i$ from each such column
contributes to an element in row $i$ of $T$ proving the shape of $T$ is indeed
$\alpha$.  

We next prove that $\fwt(T*T_\lambda)=\nu$ by iteration 
on $\partial(R^i)$, where $R^i$ is the sub-reverse plane partition
obtained by deleting all letters larger than $i$ from $R$.
We shall prove that $\fwt(\partial(R^i))=\nu^i$ where
$\nu^i/\lambda=\shape(R^i)$ by showing that the uncovered entries
in the inflated weight tableau $T^i$ of $\partial(R^i)$
match the column heights of $\nu^i$.

The base case is $\partial(R^0)=T_\lambda$,
whose inflated weight tableau $T^0$ contains $\lambda_i$ $i$'s {\it right} justified.  
The construction of the inflated weight tableau $T^1$ of
$\partial(R^1)$ involves rows $r_1\geq \ldots\geq r_t$, 
where $r_1$ is the height of the leftmost column $c_1$ containing 
a 1 in $R^1$.  Row positions $r_2,\ldots,r_t$ are similarly obtained by 
reading columns west to east.  $T^1$ is obtained by successively 
sliding $r_1,\ldots,r_t$ into row $\ell(\lambda)+1$ atop $T^0$ as far right 
as possible without violating the column increasing condition.  
Note that $r_1$ takes position above the entry in $T^0$ which records the height 
of column $c_1$ in $T^0$.  The same argument applied to the remaining
row positions implies that uncovered entries of $T^1$ match the column heights of $\nu^1$.
The results follows by iteration.

The bijectivity of $\partial$ is seen through the construction of its inverse.
A unique reverse plane partition $R$ of shape $\nu/\lambda$
corresponds to $T*T_\lambda$ using the following construction.
Starting with row $i=1$ of $T$, place an i in the leftmost empty cell $(e,c)$ 
of $\nu/\lambda$, where $e$ is the rightmost entry in the first row of $T$. Fill 
all empty cells below $(e,c)$ in column $c$ with i as well.  Proceed by placing 
an $i$ in the leftmost empty cell $(\hat e,\hat c)$ where $\hat e$ is the next 
to last entry $\hat e$ in row $i$ of $T$.  Repeat on all entries in row $i$
of $T$ and iterate letting $i=i+1$.
\end{proof}

\subsection{Tabloids to set-valued tableaux}

The jeu de taquin operation induces a crystal graph on the set of
tabloids with fixed inflated weight.
The highest weights are simply tabloids with strictly increasing columns.

\begin{prop}[\cite{genomic}]
\label{prop:tabloidcrystal} 
For each skew partition $\nu/\lambda$, a crystal graph whose
vertices are augmented tabloids $\T_\lambda(\cdot,\fwt=\nu)$
supports $g_{\nu/\lambda}$.  The highest weights are 
semi-standard tableaux in $\T_\lambda(\cdot,\fwt=\nu)$.
\end{prop}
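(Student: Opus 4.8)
The plan is to split the statement into a character identity, which is essentially free given the earlier bijection, and a structural claim about the crystal, and then to reduce both to properties of the inflated-weight algorithm of Definition~\ref{def:iwt}.

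First I would dispose of the word \emph{supports} at the level of characters. By Proposition~\ref{lem:genomic} the map $\partial$ is a weight-preserving bijection $\RPP(\nu/\lambda,\alpha)\to\T_\lambda(\alpha,\fwt=\nu)$, where the statistic on an augmented tabloid $T*T_\lambda$ is the composition $\alpha=\shape(T)$ recording the row lengths of its tabloid part, which equals the weight of the corresponding reverse plane partition. Summing over $\alpha$ and invoking \eqref{newskewg} gives
\[
\sum_{T*T_\lambda\in\T_\lambda(\cdot,\fwt=\nu)} \bx^{\shape(T)} \;=\; \sum_\alpha r_{\nu/\lambda,\alpha}\,\bx^\alpha \;=\; g_{\nu/\lambda}\,.
\]
Thus any crystal whose vertex set is $\T_\lambda(\cdot,\fwt=\nu)$ with weight function $\shape(T)$ already has character $g_{\nu/\lambda}$; once the crystal is shown to be a normal type-$A$ crystal it decomposes into connected highest-weight components, and its character becomes $\sum_{\text{sources }T}s_{\shape(T)}$, placing the Schur expansion of $g_{\nu/\lambda}$ precisely on the highest weights. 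It therefore remains to build the crystal and to identify its sources.

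Second, I would construct the operators $e_i,f_i$ from the jeu de taquin moves that shift a single box between rows $i$ and $i+1$ of the tabloid part, so that $f_i$ decreases $\alpha_i$ by one and increases $\alpha_{i+1}$ by one. The essential lemma, and what I expect to be the main obstacle, is that such a move is well defined on $\T_\lambda(\cdot,\fwt=\nu)$: it must keep the filling a tabloid and, crucially, leave the inflated weight equal to $\nu$. I would prove this by tracking how one slide propagates through the iterative construction of the inflated weight tableau $\hat T$, showing that the move descends to the ordinary box-move on the column-strict filling $\hat T$ and hence changes the ordinary weight of $\hat T$ without disturbing the multiset of uncovered column tops, which is exactly the data recording $\nu$. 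This is the same style of analysis already used for single slides in Propositions~\ref{lem:tabdweightyam} and~\ref{prop:svtdwtyam}, where sliding is shown to preserve the interval-valued-column structure; I would extend those arguments from one insertion to a full box-move.

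Third, with closure and the weight change established, I would verify Stembridge's local axioms (or, equivalently, exhibit a weight-preserving strict embedding into the standard word crystal) to conclude normality. Normality yields the Schur positivity promised above together with the decomposition into components $B(\mu)$, each contributing $s_\mu$ for its unique source of partition weight $\mu$. Finally I would characterize the sources: a vertex is annihilated by every $e_i$ exactly when no box can be raised from row $i+1$ into row $i$, and I would show this is equivalent to $T*T_\lambda$ being column strict, i.e.\ a semistandard tableau in $\T_\lambda(\cdot,\fwt=\nu)$. The forward direction is immediate, since raising a box out of a column-strict filling would break either column strictness or the inflated-weight constraint, while for the converse a repeated entry in a column supplies the unbracketed box that some $e_i$ can raise. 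Throughout, the delicate point is the compatibility of the box-moves with the inflated-weight sliding algorithm; once that is in hand, the remaining crystal-theoretic bookkeeping is routine and the identity $g_{\nu/\lambda}=\sum_{\text{sources}}s_{\shape(T)}$ follows.
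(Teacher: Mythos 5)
Your opening step is sound and matches how the paper itself exploits Proposition~\ref{lem:genomic}: summing $\bx^{\shape(T)}$ over $\T_\lambda(\cdot,\fwt=\nu)$ recovers $g_{\nu/\lambda}$ via \eqref{newskewg}, so any normal crystal on this vertex set with weight function $\shape(T)$ would automatically have the right character. The gap is in everything after that. The entire content of the proposition is the \emph{existence} of the crystal operators and the identification of the sources, and your plan never actually defines the operators: ``jeu de taquin moves that shift a single box between rows $i$ and $i+1$'' is not a crystal operator until you specify the selection rule --- which entry moves, what happens to its value, and when $f_i$ is undefined --- and every subsequent step (closure of $\T_\lambda(\cdot,\fwt=\nu)$ under the operators, the Stembridge axioms, the source characterization) depends on that rule. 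Your self-identified ``essential lemma,'' that a box move preserves $\fwt=\nu$, is asserted by analogy with Propositions~\ref{lem:tabdweightyam} and~\ref{prop:svtdwtyam}, but those results characterize when $\fwt$ coincides with $\wt$ for Yamanouchi fillings; they are not invariance statements for moves, so the analogy is not a proof. Your converse in the source characterization (``a repeated entry in a column supplies the unbracketed box'') both presupposes the undefined bracketing rule and overlooks that $T*T_\lambda$ can fail to be semi-standard because $\shape(T)$ is not a partition, or because a column \emph{decreases}, not only because of a column repeat.

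What you propose is, in effect, to re-derive the cited result of \cite{genomic} from scratch, with the hard parts deferred. The paper's route is far shorter and is available to you: Galashin \cite{Gal14} already constructed a crystal on reverse plane partitions supporting $g_{\nu/\lambda}$, and Proposition~\ref{lem:genomic} says $\partial$ is a bijection carrying the RPP weight to the tabloid shape; transporting the operators through the graph isomorphism $\partial$ makes $\T_\lambda(\cdot,\fwt=\nu)$ a crystal with no axioms left to check, and the only remaining work is to verify that $\partial$ sends the highest weights of the RPP crystal to the semi-standard tableaux in $\T_\lambda(\cdot,\fwt=\nu)$. If you insist on a self-contained construction, you must write the operators down explicitly and prove inflated-weight invariance for them; as it stands, the steps labelled ``I would show'' and ``routine bookkeeping'' are precisely the theorem.
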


A different perspective is studied by Galashin in~\cite{Gal14} using
a crystal graph on reverse plane partitions.  Proposition~\ref{prop:tabloidcrystal} 
can be proven by applying the graph isomorphism $\partial$
to the reverse plane partition crystal.
The tabloid point of view is amenable to a bijection  $\phi$ of~\cite{BM12} 
associating set-valued tableaux to strict elegant fillings and this route
leads to the $K$-theoretic Littlewood-Richardson structure constants.

The map $\phi$ on set-valued tableaux is defined by iteratively eliminating
multicells through a process called {\it dilation}.
Given a set-valued tableau $S$, let $row(S)$ be the highest row containing a 
multicell.  Let $S_{>i}$ denote the subtableau formed by taking only rows of $S$ above
row $i$.  For the rightmost multicell $c$
in ${\rm row}(S)$, define $x=x(S)$ to be the largest entry in $c$. 
The {dilation} of $S$, $di(S)$, is constructed from $S$ by 
removing $x$ from $c$ and RSK-inserting $x$ into $S_{>{\rm row}(S)}$.

\begin{Ex}
Since $row(S)=2$ and $x(S)=6$, 
$$
di\left( \tiny
\tableau[mcY]{7\cr6&7\cr34&456&8\cr1&12&23&5}
\right)=
\tiny\tableau[mcY]{7&7\cr6&6\cr34&45&8\cr1&12&23&5}
$$
\end{Ex}

It was proven~\cite[Property~4.4]{BM12} that dilation
preserves Knuth equivalence.  More precisely, for any
set-valued tableau $S$,
\begin{equation}
\label{eq:setknuth}
\fw(S)\sim \fw(di(S))\,,
\end{equation}
where the \emph{set-valued row word} $\fw(S)$ 
is the word obtained by listing the entries from rows of $S$ 
as follows (starting in the highest row): first ignore the smallest entry in each 
cell and record the remaining entries in the row from right to left and 
from largest to smallest within each cell, then record the
smallest entry of each cell from left to right.

\begin{Ex} \label{Ex:SVTword}
$$
S=\tiny\tableau[mcY]{3&456\cr12&23&3} \implies \fw(S)=653432123\,.
$$
\end{Ex}

Dilation expands a set-valued tableau by reducing the number of entries 
in a given multicell by one.  The iteration of this process produces
a semi-standard tableau from a set-valued tableau.

\begin{definition}
The map $\phi$ acts on a set valued tableau $S$ by constructing the sequence 
of set-valued tableaux
\begin{equation}
\label{inflatedseq}
S = S_0 \to di(S) = S_1 \to di(S_1) = S_2 \to \dots \to S_r\end{equation}
and defining $\phi(S)$ to be the filling of
$\sh(S_r)/\sh(S)$ where cell $\sh(S_i) / \sh(S_{i-1})$ contains
the difference between the row index of this cell and ${\rm row}(S_{i-1})$.
The sequence \eqref{inflatedseq} is defined to terminate 
at the first set-valued tableau $S_r$ with no multicell.
\end{definition}

\begin{Ex} \label{Ex:EF-SVTbij}
$$
\phi \left({\tiny\tableau[mcY]{4\cr2&23\cr1&1&1234}} \right)
={\scriptsize\tableau[scY]{2&3\cr &2\cr &&1\cr && }}
$$
is constructed by recording
the sequence of dilations 
$$
    \tiny\tableau[mcY]{4\cr2&23\cr1&1&1234} \to
    \tableau[mcY]{4\cr3\cr2&2\cr1&1&1234} \to
    \tableau[mcY]{4\cr3\cr2&2&4\cr1&1&123} \to
    \tableau[mcY]{4\cr3&4\cr2&2&3\cr1&1&12} \to
    \tableau[mcY]{4&4\cr3&3\cr2&2&2\cr1&1&1}\\
$$
$$
    \scriptsize\tableau[scY]{ \cr & \cr & & } \hspace{.5 cm}
\qquad
    \tableau[scY]{2\cr \cr & \cr & & } \hspace{.5 cm}
\qquad
    \tableau[scY]{2\cr \cr & & 1\cr & & } \hspace{.5 cm}
\qquad
    \tableau[scY]{2\cr &2\cr & & 1\cr & & } \hspace{.5 cm}
\qquad
    \tableau[scY]{2&3\cr &2\cr & &1\cr & & }
$$
\end{Ex}

The restriction of $\phi$ to an action on a subset of set-valued tableaux 
determined by their set-valued row words and a tableau $T$ is a bijection $\phi_T$ between 
these set-valued tableaux and elegant fillings. 
In particular this gives a bijective proof of Lenart's formula \cite[Theorem 3.2]{Lenart}.

\begin{prop}[\cite{BM12}, Proposition 5.6] 
\label{phi}
For any fixed tableau $T$ and partition $\eta\subset\shape(T)$,  
$$
\phi_T: 
\{S\in \SVT(\eta): \fw(S)\sim \wor(T)\}
\leftrightarrow 
 \EF(\shape(T)/\eta)
$$
is a bijection.
\end{prop}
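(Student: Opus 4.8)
The plan is to make the domain of $\phi_T$ explicit, verify that $\phi$ actually lands in $\EF$, and then invert the dilation process step by step. I would begin by determining which set-valued tableaux satisfy the Knuth condition $\fw(S)\sim\wor(T)$. Running the dilation sequence $S=S_0\to S_1\to\cdots\to S_r$ from \eqref{inflatedseq}, the preservation of Knuth equivalence \eqref{eq:setknuth} gives $\fw(S)\sim\fw(S_r)$. Since $S_r$ has no multicell, its set-valued row word collapses to the ordinary row word, so $\fw(S_r)=\wor(S_r)$ and hence $P(\fw(S))=P(\wor(S_r))=S_r$. Because $P(\wor(T))=T$ and two words are Knuth equivalent exactly when they share an insertion tableau, the condition $\fw(S)\sim\wor(T)$ is equivalent to $S_r=T$. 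Thus the domain is precisely $\{S\in\SVT(\eta):\ S_r=T\}$, and for each such $S$ the output $\phi(S)$ is a filling of $\shape(T)/\eta$.

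Next I would check that $\phi(S)\in\EF(\shape(T)/\eta)$. Each dilation removes a single entry from a multicell in row ${\rm row}(S_{i-1})$ and $\RSK$-inserts it into the rows above; since the multicell still carries an entry afterward, $\sh(S_i)/\sh(S_{i-1})$ is exactly one new cell, the sequence has length $r=|\shape(T)|-|\eta|$, and the new cells exhaust $\shape(T)/\eta$. The value recorded in a new cell lying in row $i'$ equals $i'-{\rm row}(S_{i-1})\le i'-1$ because ${\rm row}(S_{i-1})\ge 1$, which yields the elegant-filling row bound. The real content is strict increase along rows and up columns, which I would obtain by tracking how the $\RSK$ bumping paths of successive dilations are nested. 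Because $di$ always selects the topmost multicell row, then the rightmost multicell, then the largest entry, the cells created by consecutive dilations are forced into a controlled northeast-to-southwest pattern and the recorded row differences strictly increase in the required directions; establishing this monotonicity is the \emph{main obstacle}.

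Finally I would construct the inverse by reverse dilation. Given $E\in\EF(\shape(T)/\eta)$, starting from $T$ I would undo the dilations one at a time: the cells of $E$ are processed in the reverse of the order in which $\phi$ created them, an order recoverable from the shape and entries of $E$, and a cell in row $i'$ carrying value $d$ prescribes a reverse $\RSK$ bumping from row $i'$ down through $d$ rows, depositing the ejected letter into the multicell of row $i'-d$. Since reverse bumping inverts $\RSK$ insertion step for step, each reverse dilation undoes the corresponding dilation, so this reconstructs a unique $S\in\SVT(\eta)$ with $S_r=T$ and $\phi(S)=E$. Verifying that the processing order is unambiguously determined by $E$, and that every reverse bumping path has exactly the prescribed length so the ejected letter genuinely enlarges the correct multicell, is the delicate check that turns $\phi_T$ into a well-defined two-sided inverse and completes the proof.
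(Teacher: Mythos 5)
A preliminary remark: the paper itself gives no proof of this proposition --- it is imported verbatim from \cite{BM12} (Proposition 5.6), and only its inflated-weight refinement, Proposition~\ref{lem:dwt}, is proved in the text --- so your attempt has to be judged on its own merits. The parts of your proposal that are carried out are correct: by \eqref{eq:setknuth} the terminal tableau $S_r$ of the sequence \eqref{inflatedseq} satisfies $\fw(S)\sim\fw(S_r)=\wor(S_r)$, hence $P(\fw(S))=S_r$, and since Knuth equivalence is equality of insertion tableaux, the domain condition $\fw(S)\sim\wor(T)$ is indeed equivalent to $S_r=T$; likewise the count $r=|\shape(T)|-|\eta|$ of dilation steps and the row bound $i'-{\rm row}(S_{i-1})\le i'-1$ are right.

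The difficulty is that the two claims you defer are not routine checks but the entire mathematical content of the proposition. First, you never establish that $\phi(S)$ is a \emph{strict} elegant filling, i.e.\ that the recorded values strictly increase across rows and up columns of $\shape(T)/\eta$; the asserted ``controlled northeast-to-southwest pattern'' of new cells is exactly what must be proved, via a row-bumping-lemma analysis comparing the insertion routes of consecutive dilations in each of the three regimes (successive entries of one multicell, passage to a multicell further left in the same row, passage to a lower row), and these regimes behave differently. Second, and more seriously, the inverse construction presupposes that the order in which $\phi$ created the cells of $E$ is ``recoverable from the shape and entries of $E$''; but injectivity and surjectivity hinge precisely on this point --- a priori two distinct $S$ could yield the same filling through different creation orders, and an arbitrary $E\in\EF(\shape(T)/\eta)$ might not arise from any admissible order at all. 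A proof must exhibit a canonical processing order determined by $(T,E)$ alone, show each reverse bumping route from a cell in row $i'$ with entry $d$ has length exactly $d$, and show the ejected letter legally enlarges a cell in row $i'-d$ into a multicell. As written, your text is an accurate outline of the strategy of \cite{BM12}, but with both crucial verifications flagged rather than performed, it is a plan, not a proof.
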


Our purposes require a refinement of this result which
identifies strict elegant fillings with subsets of set-valued 
tableaux characterized by inflated weight. To state the result we define {\it $\lambda$-augmented set-valued tableaux} in a way analogous to that for tabloids. Precisely, let
$$\SVT_\lambda(\eta,\fwt=\alpha)=\{S*T_\lambda\,:\,S\in\SVT(\eta)\text{ and }\fwt(S*T_\lambda)=\alpha\}.$$

\begin{prop} 
\label{lem:dwt}
For each $T\in \SSYT_\lambda(\mu,\fwt=\alpha)$ and partition $\eta\subset\mu$,  
$$
\phi_T: 
\{S\in \SVT_\lambda(\eta,\fwt=\alpha): \fw(S)\sim \wor(T)\} \leftrightarrow 
 \EF(\shape(T)/\eta)
$$
is a bijection.
\end{prop}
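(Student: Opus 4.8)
The plan is to read Proposition~\ref{lem:dwt} as the $\la$-augmented, inflated-weight refinement of Proposition~\ref{phi}: the map is still $\phi_T$, and the work lies entirely in re-identifying its domain. Write $T=T^{\circ}*T_\lambda$ with $T^{\circ}\in\SSYT(\mu)$ and, for $S$ in the proposed domain, $S=S^{\circ}*T_\lambda$ with $S^{\circ}\in\SVT(\eta)$. The first point to record is that the block $T_\lambda$ plays no active role in dilation: it sits disjointly to the southeast, sharing no row or column with the upper-left part, so every RSK insertion prompted by a multicell of $S^{\circ}$ stays in the upper-left region and leaves $T_\lambda$ untouched. Consequently the dilation sequence~\eqref{inflatedseq} of $S^{\circ}*T_\lambda$ is that of $S^{\circ}$ with $T_\lambda$ as a spectator, it terminates at $S_r=P(\fw(S^{\circ}))*T_\lambda$, and the elegant filling $\phi_T(S)$ occupies only the upper-left region $\shape(T^{\circ})/\eta=\shape(T)/\eta$.

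Since dilation preserves Knuth equivalence~\eqref{eq:setknuth} and the spectator $T_\lambda$ contributes a fixed common factor to both reading words, the disjointness of the two $*$-blocks lets me peel it off and reduce the condition $\fw(S)\sim\wor(T)$ to $\fw(S^{\circ})\sim\wor(T^{\circ})$; in that case $P(\fw(S^{\circ}))=T^{\circ}$, so $S_r=T$. Stripping $T_\lambda$ therefore sets up a shape- and word-compatible identification
$$
\{\,S\in\SVT_\lambda(\eta,\fwt=\alpha):\fw(S)\sim\wor(T)\,\}
\;\longleftrightarrow\;
\{\,S^{\circ}\in\SVT(\eta):\fw(S^{\circ})\sim\wor(T^{\circ})\,\},
$$
under which $\phi_T$ becomes $\phi_{T^{\circ}}$, and Proposition~\ref{phi} carries the right-hand side bijectively onto $\EF(\shape(T^{\circ})/\eta)=\EF(\shape(T)/\eta)$. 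To make this identification legitimate I must check that the two conditions defining the augmented domain---being $\la$-augmented and having inflated weight $\alpha$---are exactly what the map $S^{\circ}\mapsto S^{\circ}*T_\lambda$ produces. Augmentation is immediate from the spectator property applied also to the inverse $\phi_{T^{\circ}}^{-1}$: undilating any $f\in\EF(\shape(T)/\eta)$ only rearranges the upper-left region, so its image has southeast block exactly $T_\lambda$.

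The crux, and the step I expect to be the main obstacle, is the inflated-weight bookkeeping: I would prove the key lemma that dilation preserves the inflated weight of a $\la$-augmented set-valued tableau, $\fwt(S*T_\lambda)=\fwt(di(S)*T_\lambda)$. Granting it, iterating along~\eqref{inflatedseq} gives $\fwt(S^{\circ}*T_\lambda)=\fwt(S_r)=\fwt(T)=\alpha$ for every $S^{\circ}$ with $\fw(S^{\circ})\sim\wor(T^{\circ})$, so the requirement $\fwt=\alpha$ is automatically met and imposes no further restriction---precisely what is needed to complete the identification of the previous paragraph. To prove the lemma I would compare the two inflated-weight constructions cell by cell: dilation removes the largest entry $x$ of the highest, rightmost multicell and reinserts it one row higher along an RSK bumping path, and the essential claim is that in the inflated-weight tableau this $x$ is slid into the same column whether it is processed as the maximal entry of its original multicell or as the new singleton created above it. Matching the RSK bumping route against the ``rightmost empty cell'' sliding rule of the inflated-weight definition is the delicate part, but it shows that the multiset of maximal entries in uncovered cells---whose conjugate sorted shape is $\fwt$---is unchanged. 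Combining the three steps, $\phi_T$ restricts to the asserted bijection.
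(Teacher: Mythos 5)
Your overall architecture matches the paper's: the paper likewise reduces Proposition~\ref{lem:dwt} to Proposition~\ref{phi} plus the single claim that dilation preserves inflated weight, $\fwt(S)=\fwt(di(S))$ (your preliminary bookkeeping about $T_\lambda$ being a spectator and about peeling off the Knuth-equivalence condition is correct, and the paper compresses exactly this reduction into one sentence). The problem is that this claim is where essentially all of the content lies, and you do not prove it --- you announce that you would, and the one-sentence sketch you give of the intended argument is incorrect as stated. You claim the dilated entry $x$ ``is slid into the same column'' of the inflated-weight tableau whether it is read from its original multicell or from its new position. That is false in general: dilation RSK-inserts $x$ into the rows above, displacing an entire bumping chain $x_r=x,\,x_{r+1},\dots,x_n$, and in the inflated-weight tableau every one of these entries can move. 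The paper's proof handles this with an explicit case analysis: writing $c_i=(i,j_i)$ for the leftmost cell of row $i$ containing $x_i$ in the inflated-weight tableau $D$ of $S$, it shows that in the inflated-weight tableau $\tilde D$ of $di(S)$ either (a) $x_i$ replaces $x_{i+1}$ in column $j_{i+1}$ when $j_{i+1}\le j_i$ --- a column that may well differ from $j_i$ --- or (b) $x_{i+1}$ is deleted and $x_i$ lands in column $j_i$ one row higher.

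Moreover, a second, separate argument is then still required (and is given in the paper) to show that these cell-level changes never alter the set of uncovered entries; in particular one must treat the delicate situation where an empty cell sits above a bumping-route cell $c_{t+1}$ in $D$, in which case alternative (b) slides $x_{t+1}$ up into that cell in $\tilde D$. Your proposal neither states the correct local description of how the inflated-weight tableau changes under dilation nor carries out this covered/uncovered bookkeeping; since everything else in the proposition is routine, this omission is not a missing detail but the proof itself.
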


\begin{proof}
By definition of $\phi_T$ and Proposition~\ref{phi}, it suffices to show 
that $\fwt(S)=\fwt(di(S))$  for set-valued tableau $S$.  The dilation of $S$ 
is an iterative process whereby a generic step involves the deletion of 
the largest entry $b$ from some multicell in row $r$, followed by the RSK-insertion 
of this entry starting in row $r+1$.  Row $r$ is taken to be the highest row
containing a multicell.  We proceed by showing how entries in the inflated 
weight tableau $D$  of a set-valued tableau $S$ differ from those in the inflated weight tableau
$\tilde D$ of the set-valued tableau  $\tilde S$ resulting from such a step.
From this, we argue that the uncovered entries are the same in $D$ and $\tilde D$.

Rows lower than row $r$ in $D$ and $\tilde D$ are identical since
these are unchanged by the step in dilation and the construction of inflated 
weight tableaux proceeds from bottom to top.  We claim next that rows $r$ 
of $D$ and $\tilde D$ are identical except for the omission of entry $b$ in 
$\tilde D$.  Let $a$ be the maximal entry smaller than $b$ in its multicell 
of $S$.  In row $r$ of $D$, if $b$ lies in column $j_r$, then $a$ is in 
column $j_r-t$ for some $t\geq 0$.  Note then that any cell $c$ 
in row $r$ between column $j_r-t$ and $j_r$ is empty and there 
is an entry $e>a$ below it.
Therefore, deleting $b$ from row $r$ of $D$ and reconstructing
the dilation leaves all other entries in this row unchanged.

Next consider the entries  $\{x_r,\ldots,x_n\}$ lying in the cells of
the bumping route created by the deletion of $x_r=b$ from row $r$ of $S$
followed by its insertion into row $r+1$.  In particular, $x_i$ lies in 
row $i$ of $S$ whereas it lies in row $i+1$ of of $\tilde S$.
Now consider the inflated tableau $D$ and let $c_i=(i,j_i)$ denote the 
leftmost cell of row $i$ in $D$ containing $x_i$, for $i=r,\ldots,n$.  
We prove that the only difference between row $i+1$ in $\tilde D$ and $D$
is 

(a) entry $x_{i+1}$ is replaced by $x_i$ in $c_{i+1}$ when $j_{i+1}\leq j_i$

(b) entry $x_{i+1}$ is deleted and $x_i$ is placed in cell $(i+1,j_{i})$ 
above $c_{i}$ in $\tilde D$ otherwise.

The inflated tableau $D$, with $x_{r+1}$ in column $j_{r+1}$, has an entry $e_1<x_{r+1}$ 
in column $j_{r+1}$ and entry $e_2\geq x_{r+1}$ in $(\hat r,j_{r+1}+1)$ for 
some $\hat r\leq r+1$.  By the previous discussion, since $x_r<x_{r+1}$,
$e_1$ and $e_2$ occur in $\tilde D$ in their same positions
unless $e_1=x_r$ is in $c_r$.
Note that $j_{r+1}\leq j_r$ occurs precisely when $e_1<x_r$ or $e_1=x_r$ is in $c_r$.
In this case, since $x_{r+1}$ is the leftmost entry larger than $x_r$ in row $r+1$ of $D$,
the entry $x_r$ replaces $x_{r+1}$ in position $c_{r+1}$ of $\tilde D$.  
When $j_{r+1}>j_r$, $e_1\geq x_r$ and cells of row $r+1$
and columns $j_r,j_{r}+1,\ldots,j_{r+1}-1$ must be empty
since $c_{r+1}$ contains the the leftmost entry larger than $x_r$ 
in row $r+1$ of $D$.  We see then that going from $D$ to $\tilde D$
in row $r+1$, $x_{r+1}$ is deleted and $x_r$ takes the position in column $j_r$.
The claim for generic $i$ follows by iteration.

It thus remains to show that the specified change from $D$
to $\tilde D$ does not change the set of uncovered entries.
Since every column $j\neq j_i$ is the same in $D$ and $\tilde D$, we need 
only consider columns $j_i$.
For each $t$ such that $j_t<j_{t+1}$, columns $j_t,j_t+1,\ldots,j_{t+1}-1$ 
are empty in row $t+1$ of $D$ implying that $x_t$ lies below the empty cell $(t+1,j_t)$ 
in $D$.
In $\tilde D$, $x_t$ simply slides up into this cell and the status of covered entries
is unaltered.  Note if an empty cell lies above $c_{t+1}$ in $D$,
$(b)$ applies for $i=t+1$ and results in sliding $x_{t+1}$ up into 
this empty cell in $\tilde D$.

For $t$ where $j_t\geq j_{t+1}$, $x_t$ lies just below an entry in row 
$t+1$ of $D$ whereas it lies in position $c_{t+1}=(t+1,j_{t+1})$ of $\tilde D$.
Again, our only worry then is if an empty cell lies above $c_{t+1}$ in $D$.
However, if it does, $(b)$ applies with $i=t+1$ and results in 
moving $x_{t+1}$ above $x_t$ in $\tilde D$.
\end{proof}

\section{Dual approach to the $K$-theoretic LR rule}

The preceding maps come together to identify the $g$-expansion of $g_{\nu/\lambda}$ 
without soliciting
the help of Theorem~\ref{thm:20} and Proposition~\ref{prop:21}. The last piece
of the puzzle is an appropriate sign reversing involution 
on $\lambda$-augmented set-valued tableaux. 

\begin{definition} (see also~\cite[Lemma 3]{IS14}). 
For partitions $\lambda$ and $\eta$, the map 
$$\tau:\SVT_\lambda(\eta)\to\SVT_\lambda(\eta)$$
acts on $S*T_\lambda$ as follows;
when $S$ is column $\lambda$-Yamanouchi, let $\hat{S}=S$.
Otherwise, define $c$ to be the rightmost column such that $S_{\geq c}$ 
is not column $\lambda$-Yamanouchi, where S$_{\geq c}$ is the set-valued tableau 
made up of only those cells which are weakly right of column $c$. 
Let $y$ be the rightmost letter in the column 
word $\wor(S_{\geq c})\wor(T_\lambda)$ with the property that there
are more $y$'s than $y-1$'s and take $r$ to be the row of the cell 
$(r,c)$ in $S$ containing this $y$. Denote by $cell_{\min}$ the leftmost 
cell in row $r$ containing $y$.  The image $\hat S=\tau(S)$ is defined 
by deleting $y-1$ if it is
present in $cell_{\min}$ and otherwise $\hat S$ is obtained by adding
$y-1$ to $cell_{\min}$. 
\end{definition}

\begin{example}
$$
S*T_{(2,1)}= \tiny{\tableau[mcY]{57&\textcolor{red}{7}\cr \bl&\bl& 2 \cr \bl&\bl&1&1 \cr 
\bl & \bl w(S_{\geq c})w(T_{(21)})=57\underline{\textcolor{red}{7}}211 }
}
\quad
\quad
\leftrightarrow\quad
\quad
\hat{S}*T_{(2,1)}= 
\tiny\tableau[mcY]{567&\textcolor{red}{7}\cr \bl&\bl& 2 \cr \bl&\bl&1&1\cr
\bl & \bl w(\hat{S}_{\geq c})w(T_{(21)})=567\underline{\textcolor{red}{7}}211}
$$
\end{example}

\begin{lemma}
\label{lem:srev}
For partitions $\eta, \nu,$ and $\lambda\subset \nu$,
$\tau$ is a sign-reversing involution on
$\SVT_\lambda(\eta,\fwt=\nu)$ where
set-valued tableaux with a Yamanouchi column word are fixed points. 
\end{lemma}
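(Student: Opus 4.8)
The plan is to verify, in turn, the four contents of the claim: that $\tau$ maps $\SVT_\lambda(\eta,\fwt=\nu)$ to itself, that $\tau^2=\mathrm{id}$, that $\tau$ reverses the sign $(-1)^{\varepsilon}$, and that its fixed points are exactly the column $\lambda$-Yamanouchi tableaux. Two of these are immediate. The underlying skew shape $\eta$ is preserved because $\tau$ neither creates nor destroys a cell; it only inserts or deletes one copy of $y-1$ in $cell_{\min}$. Consequently $\varepsilon(\tau(S))=\varepsilon(S)\pm1$, so $(-1)^\varepsilon$ flips and $\tau$ is sign-reversing. Likewise the fixed points are, by definition, precisely those $S$ that are column $\lambda$-Yamanouchi, which is the same as saying $\wor(S*T_\lambda)=\wor(S)\wor(T_\lambda)$ is Yamanouchi; every other $S$ satisfies $\varepsilon(\tau(S))\ne\varepsilon(S)$ and is therefore genuinely moved.

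The substantive well-definedness claim is that $\tau(S)$ is a legal set-valued tableau with $\fwt(\tau(S)*T_\lambda)=\nu$. For legality I would use that $cell_{\min}$ is the \emph{leftmost} cell of row $r$ containing $y$, so its western neighbor has maximal entry at most $y-1$ and the row condition survives; the delicate part is column-strictness, namely that in the insertion case no $y-1$ already sits directly beneath $cell_{\min}$. I expect this, together with the equality $\fwt(\tau(S)*T_\lambda)=\nu$, to be the main obstacle, and I would prove both simultaneously through the column construction of the inflated weight tableau (the Remark following the set-valued inflated weight definition) and the interval-valued-column bookkeeping already developed in the proofs of Proposition~\ref{lem:tabdweightyam} and Proposition~\ref{prop:svtdwtyam}. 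The key structural fact to extract is that the inserted or deleted $y-1$ occupies an interior slot strictly below a $y$ in the inflated weight tableau: since $cell_{\min}$ has maximal entry $y$, the letter $y-1$ is processed after $y$ by the inflation algorithm and never becomes, nor evicts, an uncovered maximal entry, so the multiset of uncovered maxima---whose conjugate partition is $\fwt$---is unchanged. It is exactly here that the choice of $r$ and of $cell_{\min}$, rather than of an arbitrary cell containing $y$, is forced by the offending-letter property (more $y$'s than $(y-1)$'s in the relevant suffix).

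For the involution identity $\tau^2=\mathrm{id}$ I would show that $\tau(S)$ recovers the same locating data $(c,y,r,cell_{\min})$. Because $c$ is chosen as the rightmost column for which $\wor(S_{\ge c})\wor(T_\lambda)$ fails to be Yamanouchi and $y$ as the rightmost offending letter of that word, and because $cell_{\min}$ lies in a column $j_0\le c$, inserting or deleting a single $y-1$ there changes none of the columns strictly right of $c$ and leaves the suffix beginning at the offending $y$ intact; hence $S_{\ge c}$ stays non-Yamanouchi, every column to the right stays Yamanouchi, and neither $c$ nor $y$ nor $r$ can shift. Since $cell_{\min}$ still contains $y$ after the move, it is again identified as the leftmost cell of row $r$ containing $y$, and the move has merely toggled the presence of $y-1$ there; a second application therefore reverses it, giving $\tau(\tau(S))=S$ and completing the proof that $\tau$ is a sign-reversing involution with the stated fixed points.
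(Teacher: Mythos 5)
Your handling of the routine parts is correct and matches the paper's: the sign flips because $\tau$ changes the excess by exactly one, the fixed points are the column $\lambda$-Yamanouchi fillings by definition, the row condition survives because $cell_{\min}$ is the leftmost cell of row $r$ containing $y$, and the involution property follows from invariance of the locating data $(c,y,r,cell_{\min})$ (the toggled $y-1$ is read before the offending $y$, so all relevant suffixes are unchanged). The gap is that the two substantive claims are announced rather than proved, and for the first one your proposed route would not work. For column-strictness in the insertion case you must rule out a $y-1$ in the cell directly below $cell_{\min}$, and you propose to extract this from the inflation bookkeeping; but Propositions~\ref{lem:tabdweightyam} and~\ref{prop:svtdwtyam} are statements about valid set-valued tableaux, so they cannot be used to certify that $\tau(S)$ is one --- the needed fact is a constraint on $S$ itself. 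The paper derives it by a word count, not by inflation: if a $y-1$ occupied a cell of column $c$ strictly below row $r$, column-strictness forces that cell to be adjacent to $(r,c)$, and the suffix beginning just after that $y-1$ has the same surplus of $y$'s over $(y-1)$'s as the suffix at the offending $y$; so either that suffix contains a later $y$, which would then also be offending (contradicting the rightmost choice of $y$), or it contains no $y$ at all, forcing $\lambda_y>\lambda_{y-1}$ and contradicting that $\lambda$ is a partition. Only with this in hand can one propagate along row $r-1$ to conclude that every entry below $cell_{\min}$ is smaller than $y-1$.

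Second, for $\fwt(\tau(S)*T_\lambda)=\fwt(S*T_\lambda)$, your ``key structural fact'' is the right endpoint, but ``processed after $y$ by the inflation algorithm'' does not prove it: the later-processed $y-1$ is only constrained to land weakly left of the column that $y$ occupies, and a priori it could stop strictly to the left, at the top of a column, and become an uncovered maximum, which would change the inflated weight. What rules this out is the following chain, which is where the choices of $c$ and $y$ actually do their work: combining the Yamanouchi property of $S_{>c}*T_\lambda$ (rightmost choice of $c$) with the offending inequality at $y$ shows that $S_{>c}*T_\lambda$ contains equal numbers of $y$'s and $(y-1)$'s; Proposition~\ref{prop:svtdwtyam} then gives interval-valued columns in its inflated weight tableau, so every $y-1$ is covered at the moment the column construction reaches cell $(r,c)$; hence each $y$ of row $r$ slides to a position above an entry at most $y-2$, and consequently the toggled $y-1$ lands in, or is deleted from, exactly the cell occupied by $y$, where it is covered and the multiset of uncovered maxima is untouched. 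Without this chain the claim that $\tau$ preserves the inflated weight, i.e., that it maps $\SVT_\lambda(\eta,\fwt=\nu)$ to itself, is not established.
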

\begin{proof}
For $S\in \SVT_\lambda(\eta,\fwt=\nu)$, we first show that
$\hat S=\tau(S)$ is a set-valued tableau.
Certainly deleting a letter maintains the column-strict condition.  
On the other hand, if $\hat S$ is obtained by adding $y-1$,
since it is added to the leftmost cell containing a $y$ in some 
row, the rows remain non-decreasing.
Moreover, since $c$ is the rightmost column where $S_{\geq j}$
fails to be column $\lambda$-Yamanouchi (and it fails at $y$),
either $y-1$ is not in column $c$ or it lies in the cell with $y$ in
row $r$.  Therefore, the entries in cell $(r-1,c)$ is strictly smaller
than $y-1$. This implies that entries below the $cell_{\min}$  are also 
smaller than $y-1$  and we see the columns remain
strictly increasing under the action of $\tau$.

Observe further that $cell_{\min}(\hat S)=cell_{\min}(S)$ and $y(S)=y(\hat S)$ since 
any change in column word due to $\tau$ occurs to the left of $y$. 
Thus by definition of $\tau$, if $S$ is  not column $\lambda$-Yamanouchi, neither
is $\hat S$.

It thus remains to show that $\tau$ preserves inflated weight. 
By assumption, $S*T_\lambda$ first fails to be column Yamanouchi 
with an entry $y$ in cell $(r,c)$. Therefore, $S_{>c}*T_\lambda$ 
is column Yamanouchi and contains the same number of $y$'s as $y-1$'s.  
By Proposition~\ref{prop:svtdwtyam}, the inflated weight tableau of $S_{>c}*T_\lambda$
has interval valued columns implying that each $y-1$ is covered. 
 This is also true 
in the continued column construction of the inflated weight 
tableau with entries in column $c$ until reaching cell $(r,c)$.
However, at cell $(r,c)$, $y$ slides over and takes a position above
an entry strictly smaller than $y-1$ since there are no uncovered $y-1$'s.
Since there are no $y-1$'s in rows below $r$ and in any column west
of $c+1$, this is true for all the $y$'s in row $r$.
Note that the inflated weight tableaux before and after applying $\tau$
are the same up until this point.
Then, if $cell_{\min}$ contains a $y$ and $y-1$, it shares
a cell in the inflated weight tableau $S*T_\lambda$
and thus the deletion of that $y-1$ would not affect the inflated weight.
On the other hand, when $cell_{\min}$ does not contain a $y-1$ but 
the action of $\tau$ introduces a one,
we see that in the corresponding inflated weight tableau it would
slide into the cell with $y$ again preserving inflated weight.
\end{proof}

\begin{theorem}
For partitions $\mu$ and $\lambda\subset \nu$,
the coefficient of $g_\mu$ in $g_{\nu/\lambda}$ is $(-1)^{|\nu|-|\lambda|-|\mu|}$ 
times the number of column $\lambda$-Yamanouchi set-valued tableaux
of shape $\mu$ and weight $\nu/\lambda$.
\end{theorem}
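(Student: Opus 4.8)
The plan is to assemble the chain of bijections and the sign-reversing involution developed in the preceding sections into a single computation that collects the monomials of $g_{\nu/\lambda}$ into the $g$-basis. I would start from the monomial expansion \eqref{newskewg}, $g_{\nu/\lambda}=\sum_\alpha r_{\nu/\lambda,\alpha}\,\bx^\alpha$. The first move is to use Proposition~\ref{lem:genomic}: the bijection $\partial$ identifies $\RPP(\nu/\lambda,\alpha)$ with the augmented tabloids $\T_\lambda(\alpha,\fwt=\nu)$, so $r_{\nu/\lambda,\alpha}=|\T_\lambda(\alpha,\fwt=\nu)|$ and $g_{\nu/\lambda}$ becomes a generating function over augmented tabloids of fixed inflated weight $\nu$, graded by ordinary weight $\alpha$. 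Next I invoke the crystal structure of Proposition~\ref{prop:tabloidcrystal}: because the crystal on $\T_\lambda(\cdot,\fwt=\nu)$ supports $g_{\nu/\lambda}$ and has highest weights exactly the semi-standard elements $\SSYT_\lambda(\cdot,\fwt=\nu)$, the expansion collapses to a sum of Schur functions indexed by the shapes $\mu$ of those highest-weight tableaux,
$$
g_{\nu/\lambda}=\sum_{\mu}\big|\SSYT_\lambda(\mu,\fwt=\nu)\big|\,s_\mu\,.
$$

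The second stage converts this Schur expansion into a $g$-expansion using Lenart's dual formula \eqref{sing}, $s_\mu=\sum_{\eta\subset\mu}(-1)^{|\mu|-|\eta|}F_\mu^\eta\,g_\eta$. Substituting, the coefficient of $g_\eta$ in $g_{\nu/\lambda}$ becomes a signed count
$$
\sum_{\mu:\eta\subset\mu}(-1)^{|\mu|-|\eta|}\,\big|\SSYT_\lambda(\mu,\fwt=\nu)\big|\,F_\mu^\eta\,.
$$
The inner product $|\SSYT_\lambda(\mu,\fwt=\nu)|\cdot F_\mu^\eta$ counts pairs consisting of a highest-weight augmented tableau $T\in\SSYT_\lambda(\mu,\fwt=\nu)$ together with a strict elegant filling in $\EF(\mu/\eta)=\EF(\shape(T)/\eta)$. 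This is precisely the domain on which the refined bijection $\phi_T$ of Proposition~\ref{lem:dwt} acts: for each such $T$, $\phi_T$ matches these pairs with the set-valued tableaux $S\in\SVT_\lambda(\eta,\fwt=\nu)$ satisfying $\fw(S)\sim\wor(T)$. Summing over all highest-weight $T$ of all shapes $\mu\supseteq\eta$, and using that each set-valued tableau has a well-defined insertion class determining a unique $T=P(\fw(S))$, the pair count reorganizes into a signed sum over all of $\SVT_\lambda(\eta,\fwt=\nu)$, where the sign $(-1)^{|\mu|-|\eta|}$ is recorded by the excess $\varepsilon(S)$.

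The final stage is the cancellation. The signed sum over $\SVT_\lambda(\eta,\fwt=\nu)$ is reduced by the involution $\tau$ of Lemma~\ref{lem:srev}: since $\tau$ preserves inflated weight $\nu$ and shape $\eta$, is sign-reversing (it changes the excess by one by adding or deleting a single letter), and has as its fixed points exactly the set-valued tableaux with a Yamanouchi column word, all non-fixed terms cancel in pairs. What survives is the signed count of column $\lambda$-Yamanouchi set-valued tableaux in $\SVT_\lambda(\eta,\fwt=\nu)$, namely those of shape $\eta$ and weight $\nu-\lambda$ whose column word is $\lambda$-Yamanouchi, weighted by $(-1)^{|\nu|-|\lambda|-|\eta|}$. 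Renaming $\eta$ to $\mu$ yields the stated formula.

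The main obstacle I anticipate is the bookkeeping in the second stage: one must verify that restricting $\phi_T$ to the inflated-weight-$\nu$ subclass genuinely partitions the full set $\SVT_\lambda(\eta,\fwt=\nu)$ as $T$ ranges over highest weights, with no overcounting or omission. Concretely, the hard part is confirming that the insertion tableau $P(\fw(S))$ of a set-valued tableau $S\in\SVT_\lambda(\eta,\fwt=\nu)$ is itself a highest-weight augmented tableau of the correct inflated weight $\nu$, so that the Knuth class $\fw(S)\sim\wor(T)$ picks out exactly one $T$ per $S$ and the union over $T$ is disjoint. This requires knowing that $\phi_T$ preserves inflated weight — which is the content of Proposition~\ref{lem:dwt} — together with the compatibility of the crystal highest-weight condition with the Knuth class, and it is where the interlocking of the inflated-weight machinery with RSK insertion must be handled with care.
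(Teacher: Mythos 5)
Your proposal is correct and follows essentially the same route as the paper's own proof: the bijection $\partial$ of Proposition~\ref{lem:genomic}, the crystal of Proposition~\ref{prop:tabloidcrystal} giving the Schur expansion over $\SSYT_\lambda(\cdot,\fwt=\nu)$, Lenart's formula \eqref{sing}, the refined bijection $\phi_T$ of Proposition~\ref{lem:dwt} (with the same reorganization of the sum over $T$ into a sum over all $S$ via iterated dilation and \eqref{eq:setknuth}), and finally the involution $\tau$ of Lemma~\ref{lem:srev}. The only detail left implicit is that your closing identification of the fixed points as tableaux of weight $\nu-\lambda$ rests on Proposition~\ref{prop:svtdwtyam} (column Yamanouchi implies $\wt=\fwt$), which the paper invokes explicitly at that step.
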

\begin{proof}
Proposition~\ref{lem:genomic} allows us to convert reverse plane partitions 
of fixed shape into a class of augmented tabloids of fixed inflated weight, implying that
$$
g_{\nu/\lambda} =  
\sum_{A\in \T_\lambda(\cdot,\fwt=\nu)} \bx^{\shape(A)}
\,.
$$
The crystal graph of Proposition~\ref{prop:tabloidcrystal}  implies that
\begin{equation}
g_{\nu/\lambda} = \sum_{T\in\SSYT_\lambda(\cdot,\fwt=\nu)}
s_{\shape(T)}\,.
\end{equation}
Lenart's elegant formula \eqref{sing} then converts this identity to
an expansion in terms of the basis of $g$-functions:
$$
g_{\nu/\lambda}=
\sum_{T\in \SSYT_\lambda(\cdot,\fwt=\nu)}
\sum_{\eta\subset\shape(T)}
\sum_{E\in\EF(\shape(T)/\eta)} 
(-1)^{|\shape(T)|-|\eta|} 
\,g_{\eta}
\,.
$$
For each tableau $T$, we use the bijection $\phi_T$
and Proposition~\ref{lem:dwt} to
replace elegant fillings by set-valued tableaux.
$$
g_{\nu/\lambda}=
\sum_{T\in \SSYT_\lambda(\cdot,\fwt=\nu)}
\sum_{\eta\subset\shape(T)}
\sum_{S\in \SVT_\lambda(\eta,\fwt=\nu) \atop \fw(S)\sim \wor(T)}
(-1)^{|\shape(T)|-|\eta|} \,g_{\eta}
$$
We can replace $|\shape(T)|=|\wt(T)|$ by $|\wt(S)|$
since any tableau $T$ and set-valued tableau $S$ where
$\wor(T)\sim \fw(S)$ must have matching weights.
In fact, we can drop the dependence on $T$ altogether.
Namely, the expansion of a given set-valued tableau $S$ 
by iterated dilation gives rise to a unique tableau $T$
and $\fw(S)\sim \wor(T)$ by~\eqref{eq:setknuth}.
Moreover, our proof of Proposition~\ref{lem:dwt} 
shows that the inflated weight of $T$ 
matches $\fwt(S)$ under dilation.
Therefore,
$$
g_{\nu/\lambda}=
\sum_{\eta}\sum_{S\in\SVT_\lambda(\eta, \fwt=\nu)}
(-1)^{|\wt(S)|-|\eta|} \,g_{\eta}
\,.
$$

Finally, we apply the the sign-reversing involution $\tau$ on
$\SVT_\lambda(\eta,\fwt=\nu)$ and use Lemma~\ref{lem:srev}.
Proposition~\ref{prop:svtdwtyam} then assures us that
the fixed points $S$ satisfy $\wt(S*T_\lambda)=\nu$.
\end{proof}

%
%

 \section*{Acknowledgements}
 \label{sec:ack}
The authors thank Ryan Kaliszewski for many discussions about related work
and the reviewers for suggestions on an improved exposition.


\bibliographystyle{alpha}
\bibliography{FPSACskewg}

\end{document}